\def\cc{\mathbb{C}}
\def\M{\mathcal M}
\def\N{\mathcal N}
\def\cc{\mathbb{C}}
\def\B{\mathcal B}
\def\B{\mathcal B}
\def\H{\mathcal H}
\def\L{\mathcal L}
\def\M{\mathcal M}
\def\N{\mathcal N}
\def\R{\mathcal R}
\newtheorem{set}{set}[section]
\newtheorem{Corollary}[set]{Corollary}
\newtheorem{Lemma}[set]{Lemma}
\newtheorem{Proposition}[set]{Proposition}
\newtheorem{Remark}[set]{Remark}
\newtheorem{Theorem}[set]{Theorem}
\numberwithin{equation}{section}
\begin{document}
\pagestyle{myheadings}
\date{}
\title{A note on the invariant subspace problem relative to a type ${\rm II}_1$ factor}
\author{Junsheng Fang \and Don Hadwin}
\maketitle
\begin{abstract}
Let $\M$ be a type ${\rm II}_1$ factor with a faithful normal
tracial state $\tau$ and let $\M^\omega$ be the ultrapower algebra
of $\M$.   In this paper, we prove that for every operator $T\in
\M^\omega$, there is a  family of projections $\{P_t\}_{0\leq
t\leq 1}$ in $\M^\omega$ such that $TP_t=P_tTP_t$, $P_s\leq P_t$
if $s\leq t$, and $\tau_\omega(P_t)=t$. Let $\mathfrak{M}=\{Z \in
\M:\, \text{there is a  family of projections}\, \{P_t\}_{0\leq
t\leq 1}\newline \text{in}\, \M\, \text{such that}\, ZP_t=P_tZP_t,
P_s\leq P_t\, \text{if}\, s\leq t,\, \text{and}\, \tau(P_t)=t\}$.
As an application we show that for every operator $T\in \M$ and
$\epsilon>0$, there is an operator $S\in \mathfrak{M}$ such that
$\|S\|\leq \|T\|$ and $\|S-T\|_2<\epsilon$.
 We also show that $\prod_n^\omega M_n(\cc)$
is not $\ast$-isomorphic to the ultrapower algebra of the
hyperfinite type ${\rm II}_1$ factor.
\end{abstract}
{\bf Keywords:}\,\,  Invariant subspaces, type ${\rm II}_1$
factors,
ultrapower algebras.\\
{\bf MSC:}\,\, 46L10, 47C15

\section{Introduction}
Let $\M$ be a type ${\rm II}_1$ factor acting on a Hilbert space
$\H$. The invariant subspace problem relative to a factor von
Neumann algebra $\M$ asks for every operator $T\in \M$, does there
exists a projection $P\in \M$, $0<P<I$, such that $TP=PTP$. The
hyperinvariant subspace problem relative to $\M$ asks for every
operator $T\in \M\setminus\cc I$, does there exists a projection
$P$, $0<P<I$, such that $SP=PSP$ for every operator $S$ in
$\B(\H)$ with $ST=TS$. It is easy to see that if a projection $P$
is hyperinvariant for $T$, then $P$ is in the von Neumann algebra
generated by $T$ and therefore in $\M$. A huge advance on the
(hyper)invariant subspace problem relative to a factor of type
${\rm II}_1$ has been made during past ten years (see for
example~\cite{D-H1,D-H2,H-S1,S-S}).

In 1983, Brown~\cite{Br} introduced a spectral distribution
measure for non-normal elements in a finite von Neumann algebra
with respect to a fixed normal faithful tracial state, which is
called the Brown measure of the operator. Recently, Haagerup and
Schultz~\cite{H-S1} proved a remarkable result which states that
if the support of Brown measure of an operator in a type ${\rm
II}_1$ factor contains more than two points, then the operator has
a non-trivial hyperinvariant subspace affiliated with the type
${\rm II}_1$ factor.   However, the  invariant subspace problem
relative to a type ${\rm II}_1$ factor still remains open for
operators with single point Brown measure support (for this case,
we refer to Dykema and Haagerup's paper~\cite{D-H1}).

Suppose that each $\M_n$ is a finite von Neumann algebra with a
faithful normal tracial state $\tau_n$. Let $\prod_{n\in\N}\M_n$
be the $l^\infty$-product of the $\M_n$'s. Then $\prod_n\M_n$ is a
von Neumann algebra (with pointwise multiplication). Let $\omega$
be a free ultrafilter on $\N$ ($\omega$ may be viewed as an
element in $\beta\N\setminus\N$, where $\beta\N$ is the
Stone-C\'ech compactification of $\N$).  If $\{X_n\}$ and
$\{Y_n\}$ are two elements in $\prod_n\M_n$, then we define
$\{X_n\}\sim \{Y_n\}$ when $\lim_{n\to\omega} \|X_n-Y_n\|_2=0$.
Recall that for an operator $T_n\in \M_n$,
$\|T_n\|_2=\tau_n(T_n^*T_n)^{1/2}$. Then the {\it ultraproduct\/},
denoted by $\prod^\omega\M_n$, of $\M_n$ (with respect to the free
ultrafilter $\omega$) is the quotient von Neumann algebra of
$\prod_n\M_n$ modulo the equivalence relation $\sim$ and the limit
of $\tau_n$ at $\omega$ gives rise to a tracial state on
$\prod^\omega\M_n$. We shall use $\tau_\omega$  to denote the
tracial state on $\prod^\omega\M_n$. When $\M_n=\M$ for all $n$,
then $\prod^\omega\M_n$ is called the {\it ultrapower\/} of $\M$,
denoted by $\M^\omega$. The initial algebra $\M$ is embedded into
$\M^\omega$ as constant sequences given by elements in $\M$.
 Ultrapowers for finite von Neumann algebras were
first introduced and studied by  McDuff~\cite{Mc}. Sakai~\cite{Sa}
showed that an ultrapower of a finite von Neumann algebra with
respect to a faithful normal trace is again a finite von Neumann
algebra, and the ultrapower algebra $\M^\omega$ of a type ${\rm
II}_1$ factor is also a type ${\rm II}_1$ factor. Ultrapowers of
type ${\rm II}_1$ factors play an important role in
the study of type ${\rm II}_1$ factors. \\

This paper is organized as  follows. In section 2 of this paper,
we prove that every operator in an ultrapower algebra of a type
${\rm II}_1$ factor $\M$ has a nontrivial invariant space
affiliated with the ultrapower algebra. Precisely, we prove that
for every operator $T\in \M^\omega$, there is a  family of
projections $\{P_t\}_{0\leq t\leq 1}$ in $\M^\omega$ such that
$TP_t=P_tTP_t$, $P_s\leq P_t$ if $s\leq t$, and
$\tau_\omega(P_t)=t$. This result is more or less trivial if $\M$
 has  property $\Gamma$. Recall that $\M$ is said to have property
$\Gamma$ if for any finite elements $T_1,\cdots, T_n$ in $\M$ and
$\epsilon>0$, there is a unitary operator $U$ in $\M$ such that
$\tau(U)=0$ and $\|T_iU-UT_i\|_2<\epsilon$ for $1\leq i\leq n$. If
$\M$ is a separable (with separable predual) type ${\rm II}_1$
factor, then $\M$ has property $\Gamma$ if and only if $\M'\cap
\M^\omega$ is non-trivial. Dixmier [Di] proved that if $\M'\cap
\M^\omega$ is non-trivial, then it is non-atomic. This implies
that if $\M$ has property $\Gamma$, then for every operator $T\in
\M^\omega$, there is a  family of projections $\{P_t\}_{0\leq
t\leq 1}$ in $\M^\omega$ such that $TP_t=P_tT$, $P_s\leq P_t$ if
$s\leq t$, and $\tau_\omega(P_t)=t$. To prove the result for
non-$\Gamma$ factors, we need combine techniques developed by
Haagerup and Schultz~\cite{H-S1}
and a result of  Popa~\cite{Po}.\\

As an application, in section 3 we show that for every operator
$T$ in the unit ball of $\M$ and $\epsilon>0$, there is an
operator $S\in \mathfrak{M}$ such that $\|S\|\leq 1$ and
$\|S-T\|_2<\epsilon$, where $\mathfrak{M}=\{Z \in \M:\,
\text{there is a  family of projections}\, \{P_t\}_{0\leq t\leq
1}\, \text{in}\, \M\, \text{such that}\, ZP_t=P_tZP_t, P_s\leq
P_t\, \text{if}\, s\leq t,\, \text{and}\, \tau(P_t)=t\}$. In
particular, this implies that $\mathfrak{M}$ is dense in $\M$ in
the strong operator topology.\\

In section 4, we give a very simple proof of $\prod_n^\omega M_n(\cc)$ is not
$\ast$-isomorphic to the ultrapower algebra of the hyperfinite
type ${\rm II}_1$ factor (this result might be known to specialists,
 however we can not find it in the existed literature). This result relies on a result
of Herrero and Szarek~\cite{H-S} (also see~\cite{Von}).\\

Thanks to the existence of a faithful normal tracial state on a
type ${\rm II}_1$ factor, in section 5 we show that if two
operators $S$ and $T$ are quasi-similar in a type ${\rm II}_1$
factor $\M$, then $Lat S\cap \M$ is not trivial if and only if
$Lat T\cap \M$ is not trivial. As a corollary, we show that for
two operator $S,T$ in $\M$, $Lat (ST)\cap \M$ is not trivial if
and only if $Lat (TS)\cap \M$ is not trivial. On the other hand,
if the same result also holds for arbitrary two operators in
$\B(\H)$, then the answer to the classical invariant subspace
problem is affirmative (see Remark~\ref{R:last remark}).\\

\noindent \emph{Acknowledgment:} The authors thank David Sherman for his comments on Lemma 4.2 and Theorem 4.3 and for poiting out to us von Neumann's paper~\cite{Von}.

\section{Invariant subspaces for  operators in the ultrapower algebras}

The main result of this section is the following result.

\begin{Theorem}\label{T:main result} Let $\M$ be a type ${\rm II}_1$ factor and let $\M^\omega$ be the
ultrapower algebra of $\M$. For every operator $T\in \M^\omega$,
there is a  family of projections $\{P_t\}_{0\leq t\leq 1}$ in
$\M^\omega$ such that $TP_t=P_tTP_t$, $P_s\leq P_t$ if $s\leq t$,
and $\tau_\omega(P_t)=t$.
\end{Theorem}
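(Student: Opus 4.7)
The plan is to combine Haagerup--Schultz's spectral projections with a Popa-type approximation lifted through the ultrafilter. Since $\M^\omega$ is again a type ${\rm II}_1$ factor (Sakai), the Brown measure $\mu_T$ is defined, and for every Borel $B \subset \cc$ there is a Haagerup--Schultz projection $P(T,B) \in W^*(T) \subset \M^\omega$ with $TP(T,B) = P(T,B)TP(T,B)$, $\tau_\omega(P(T,B)) = \mu_T(B)$, and monotonicity under inclusion. If $\mu_T$ is atomless, a nested family $\{D_t\}_{0\le t \le 1}$ of Borel sets with $\mu_T(D_t)=t$ can be chosen by a continuous parameterization, and $P_t:=P(T,D_t)$ gives the required chain.

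The main case is that $\mu_T$ has atoms. For an atom of mass $c$ at $\lambda$, the H--S projection $Q:=P(T,\{\lambda\})$ has trace $c$, and $QTQ$ acts on the type ${\rm II}_1$ factor $Q\M^\omega Q$ with Brown measure $\delta_\lambda$. After an affine shift, this reduces the theorem to the following \emph{Brown--quasinilpotent core}: construct a continuous invariant chain for an operator $S$ with Brown measure $\delta_0$ in (a corner of) $\M^\omega$. For this core I would invoke an approximation of Popa type: for every $S \in \M$, every $\epsilon > 0$, and every partition $0 = t_0 < t_1 < \cdots < t_N = 1$, there exist projections $0 = R_0 < R_1 < \cdots < R_N = I$ in $\M$ with $\tau(R_i) = t_i$ and $\|R_i^\perp S R_i\|_2 < \epsilon$. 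Applied to representatives $(T_n)$ of $T$ with partitions refining through the dyadic rationals and $\epsilon_n \to 0$, the ultralimits of these chains are projections in $\M^\omega$ with exact invariance $T P_t = P_t T P_t$ and prescribed trace $\tau_\omega(P_t)=t$ for every dyadic $t$; the chain is then completed by $P_t := \bigvee_{s<t,\text{ dyadic}} P_s$, which preserves invariance and the trace condition.

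The main obstacle is the Popa-type chain approximation: producing not merely one approximately invariant projection of prescribed trace, but an entire nested chain with uniform defect --- and doing so for Brown--quasinilpotent operators, for which Haagerup--Schultz by itself gives only trivial information. A secondary technicality is that the corner $Q\M^\omega Q$ arising in the reduction is not literally the ultrapower of $Q \M Q$, so either the approximation lemma must be formulated to apply to any type ${\rm II}_1$ factor admitting an ultrafilter-like lifting, or the ultralimit construction must be carried out directly inside the corner by choosing representatives of $Q$ and $T$ compatibly.
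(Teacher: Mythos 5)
Your high-level architecture (split by Brown atoms, reduce to the quasinilpotent core, lift dyadic chains through the ultrafilter) is not what the paper does, and more importantly it stalls exactly where you flag it. The ``Popa-type chain approximation'' you invoke --- for every $S\in\M$, $\epsilon>0$, and partition $0=t_0<\cdots<t_N=1$, a chain $0=R_0<\cdots<R_N=I$ with $\tau(R_i)=t_i$ and $\|R_i^\perp S R_i\|_2<\epsilon$ --- is not an available black box; it \emph{is} the theorem (it is precisely Lemma~\ref{L:general r} of the paper, and even the $N=1$, $t_1=1/2$ case is the whole difficulty). Popa's theorem as cited here says something else entirely: there is a unitary $u\in\M^\omega$ making $\{\M,u\M u^*\}''\cong\M * (u\M u^*)$. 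It provides freeness inside the ultrapower, not approximate invariance. So in the Brown-quasinilpotent core, your proposal has reduced the theorem to an unproved statement of the same strength, with no mechanism to produce it --- you yourself note that Haagerup--Schultz is trivially silent when the Brown measure is $\delta_0$.

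The paper avoids the atomic/nonatomic dichotomy altogether. Starting from $T$ in a separable $\M$, it uses Popa's free-embedding result (Lemma~\ref{L:popa}) to place $\M$ inside a copy of $\M * \L(\mathbb{F}_4)$ sitting in $\M^\omega$, picks a circular system $\{x,y\}$ in $\L(\mathbb{F}_4)$ free from $T$, sets $z=xy^{-1}$ and $T_n=T+\tfrac{1}{n}z$. The unbounded perturbation $z$ spreads the Brown measure so that Haagerup--Schultz (plus the $L^p$ results of~\cite{H-S2}) gives an \emph{exact} invariant projection of trace $1/2$ for each $T_n$; a H\"older-inequality estimate in $L^p$, $0<p<1$, shows the $\|\cdot\|_2$-defect for $T$ tends to $0$. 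This is Lemma~\ref{L:one half}. Iterating it in corners produces the nested dyadic chains (Lemma~\ref{L:general r}); a conditional-expectation-free argument (Lemma~\ref{L:one half for arbitrary factor}) removes separability; and only then does one take ultralimits to conclude. The essential idea you are missing is the perturbation $T\mapsto T+\tfrac{1}{n}z$ after a free embedding into the ultrapower, which converts the quasinilpotent case into one Haagerup--Schultz can handle; without it, the ``core'' step of your plan has no proof.
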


\begin{Corollary}\label{C:main result} Let $\M$ be a type ${\rm II}_1$ factor with
a faithful normal tracial state $\tau$. For every operator $T\in
\M$ and $0\leq t\leq 1$,  there is a sequence of projections
$P_n\in \M$ such that
$\lim_{n\rightarrow\infty}\|TP_n-P_nTP_n\|_2=0$ and $\tau(P_n)=t$.
\end{Corollary}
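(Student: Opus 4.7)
The plan is to deduce the corollary from Theorem 2.1 by a standard ultrafilter lifting argument. View $T\in\M$ as the constant sequence $(T,T,T,\ldots)$ in $\M^\omega$ via the canonical embedding. By Theorem 2.1, there is a family $\{P_t\}_{0\leq t\leq 1}\subset\M^\omega$ with $TP_t=P_tTP_t$ and $\tau_\omega(P_t)=t$. Fix the value of $t$ from the corollary statement and write $P:=P_t\in\M^\omega$.

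Next, I would lift $P$ to a representing sequence of \emph{genuine} projections in $\M$ with trace exactly $t$. Start with any bounded representing sequence $(X_n)$ for $P$; replacing $X_n$ by $(X_n+X_n^*)/2$ makes it self-adjoint without changing the class in $\M^\omega$. Since $P^2=P$, the element $\chi_{[1/2,\infty)}(X_n)$ is a projection in $\M$, and functional calculus combined with $\|X_n^2-X_n\|_2\to_\omega 0$ shows that $q_n:=\chi_{[1/2,\infty)}(X_n)$ also represents $P$ along $\omega$. Now $\tau(q_n)\to_\omega \tau_\omega(P)=t$, but the traces need not equal $t$ exactly. Using that $\M$ is type ${\rm II}_1$, I adjust each $q_n$ to a projection $p_n$ with $\tau(p_n)=t$ by adding or subtracting a subprojection of trace $|t-\tau(q_n)|$ from $1-q_n$ or $q_n$; this satisfies $\|p_n-q_n\|_2=|t-\tau(q_n)|^{1/2}\to_\omega 0$, so $(p_n)$ still represents $P$.

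Since $TP=PTP$ in $\M^\omega$, by definition of $\sim$ we obtain
\[
\lim_{n\to\omega}\|Tp_n-p_nTp_n\|_2=0.
\]
Finally, I pass from an ultrafilter limit to an ordinary sequential limit by diagonal extraction: for each $k\in\N$, the set $A_k=\{n:\|Tp_n-p_nTp_n\|_2<1/k\}$ belongs to $\omega$, hence is infinite, so I can choose $n_1<n_2<\cdots$ with $n_k\in A_k$ and set $P_k:=p_{n_k}$. Then $\tau(P_k)=t$ and $\|TP_k-P_kTP_k\|_2<1/k\to 0$, as required.

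No step presents a serious obstacle: the only mildly delicate point is ensuring that the lifted projections have trace \emph{exactly} $t$ rather than merely $t$ in the ultrafilter limit, which is handled by the subprojection adjustment available in a type ${\rm II}_1$ factor. Everything else is standard ultrapower bookkeeping once Theorem 2.1 is in hand.
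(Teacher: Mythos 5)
Your proof is correct, and since the paper does not actually write out a proof of this corollary, your lift-and-extract derivation from Theorem~\ref{T:main result} is a perfectly legitimate (and natural) reading. Each step is sound: lifting $P_t\in\M^\omega$ to genuine projections by taking a self-adjoint representing sequence and applying $\chi_{[1/2,\infty)}$ (the inequality $|s-\chi_{[1/2,\infty)}(s)|\le 2|s^2-s|$ gives $\|q_n-X_n\|_2\to_\omega 0$), correcting the traces to exactly $t$ using that $\M$ is type ${\rm II}_1$ at an $\|\cdot\|_2$-cost of $|\tau(q_n)-t|^{1/2}\to_\omega 0$, and finally extracting an ordinary sequence from the $\omega$-large sets $A_k$. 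Worth noting, however, that the paper's intermediate results already give the corollary more directly: Corollary~\ref{C:arbitrary t} transported to arbitrary type ${\rm II}_1$ factors (via Lemma~\ref{L:one half for arbitrary factor} and the remark following it) is precisely the $\epsilon$-version of this statement, so one can read off Corollary~\ref{C:main result} from the finite-level lemmas without re-entering $\M^\omega$. Your route through Theorem~\ref{T:main result} and back down is a slightly longer loop, but it is complete and requires no repair.
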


To prove Theorem~\ref{T:main result}, we need the following
lemmas.\\

Let $\M$ be a type ${\rm II}_1$ factor and let $T\in\M$. We regard
$\mathcal{M}$ as a subfactor of $\mathcal{M}_1=\mathcal{M}\ast
\L(\mathbb{F}_4)$. The faithful normal tracial state on $\M_1$
will also be denoted by $\tau$. We choose a circular system
$\{x,y\}$ (in the sense of~\cite{VDN}) that generates
$\L(\mathbb{F}_4)$ and which therefore is free from $\M$. By
Theorem 5.2 of~\cite{H-S2}, the unbounded operator $z=xy^{-1}$ is
in $L^p(\M_1,\tau)$ for $0<p<1$. Let $T_n=T+\frac{1}{n}z$. Then
$T_n\in L^p(\M_1,\tau)$ for $0<p<1$. We will need the following
lemma, which follows from Proposition 4.5, Corollary 4.6, Theorem
5.1 and Theorem 6.9 of~\cite{H-S1}.
\begin{Lemma}\label{L:H-S}With the above assumption, we have
\begin{enumerate}
\item $\lim_{n\rightarrow \infty}\|T-T_n\|_p^p=0$;

\item for every $n$, there is a projection $P_n\in \M_1$ such that
$T_nP_n=P_nT_nP_n$ and $\tau(P_n)=\frac{1}{2}$.
\end{enumerate}
\end{Lemma}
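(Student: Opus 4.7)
The lemma splits into two assertions; the plan is to handle them separately, with only the second requiring real work.

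For assertion (1), I would simply compute directly from the definition $T_n = T + \tfrac1n z$: the difference $T-T_n = -\tfrac1n z$ has $p$-norm $\tfrac1n\|z\|_p$, and since $z\in L^p(\M_1,\tau)$ for $0<p<1$ by Theorem 5.2 of~\cite{H-S2}, the quantity $\|z\|_p$ is a finite constant. Hence $\|T-T_n\|_p^p = n^{-p}\|z\|_p^p\to 0$ as $n\to\infty$.

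For assertion (2), the plan is to invoke the Haagerup--Schultz spectral-subspace construction for unbounded operators. Specifically, Theorem 5.1 of~\cite{H-S1} attaches to each $S\in L^p(\M_1,\tau)$ and each Borel set $B\subseteq\cc$ a hyperinvariant (hence invariant) projection $P(S,B)\in\M_1$ with $SP(S,B) = P(S,B)SP(S,B)$ and $\tau(P(S,B)) = \mu_S(B)$, where $\mu_S$ denotes the Brown measure of $S$. Taking $S=T_n$ reduces the problem to producing a Borel set $B_n\subseteq\cc$ with $\mu_{T_n}(B_n) = \tfrac12$; then $P_n := P(T_n,B_n)$ is the desired projection.

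To obtain such a $B_n$ it suffices that $\mu_{T_n}$ carry no atom of mass $\ge\tfrac12$; slicing $\cc$ by, for instance, a half-plane $\{\operatorname{Re}\lambda\le c_n\}$ then gives a set of Brown measure exactly $\tfrac12$. Here I would appeal to Proposition~4.5, Corollary~4.6 and Theorem~6.9 of~\cite{H-S1}. The input is that $\{x,y\}$ is a free circular pair free from $\M$, so that $z=xy^{-1}$ has a rotationally-invariant, Lebesgue-absolutely-continuous Brown measure (computed in~\cite{H-S2}); adding a free multiple $\tfrac1n z$ to $T$ is precisely the regularisation the Haagerup--Schultz machinery analyses, forcing $\mu_{T_n}$ to be atomless. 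The main obstacle is this very step: showing that the circular perturbation $\tfrac1n z$ genuinely disperses the Brown measure of an arbitrary $T\in\M$ into an atomless one. Once that is in hand, the construction of $P_n$ and verification of $T_nP_n=P_nT_nP_n$ and $\tau(P_n)=\tfrac12$ are formal consequences of the $L^p$-spectral theory of~\cite{H-S1}.
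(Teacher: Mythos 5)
The paper itself offers no written argument for this lemma; it is asserted to follow from Proposition~4.5, Corollary~4.6, Theorem~5.1 and Theorem~6.9 of~\cite{H-S1}. Your reconstruction is essentially the intended derivation: for (1), $T-T_n=-\tfrac1n z$ and $z\in L^p(\M_1,\tau)$ for $0<p<1$ give $\|T-T_n\|_p^p=n^{-p}\|z\|_p^p\to 0$ directly; for (2), the Haagerup--Schultz spectral-subspace machinery for $L^p$-operators produces, from any Borel set $B$, a hyperinvariant projection $P(T_n,B)\in\M_1$ with $\tau(P(T_n,B))=\mu_{T_n}(B)$, and the regularisation $T_n=T+\tfrac1n z$ (with $z=xy^{-1}$ circular-quotient and free from $\M$) forces $\mu_{T_n}$ to be Lebesgue-absolutely-continuous, so a Borel set of Brown measure exactly $\tfrac12$ exists. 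This is the same route as the paper's citation.

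One small imprecision: you assert that absence of atoms of mass $\ge\tfrac12$ already suffices to find a Borel set of Brown measure exactly $\tfrac12$ by half-plane slicing. That is false in general --- for instance a measure concentrated in three atoms of mass $\tfrac13$ each has no Borel set of measure $\tfrac12$. The correct ingredient, which you do also invoke, is the full atomlessness (indeed absolute continuity) of $\mu_{T_n}$ supplied by the Haagerup--Schultz analysis of the circular perturbation; once that is in hand the intermediate-value argument for $c\mapsto\mu_{T_n}(\{\operatorname{Re}\lambda\le c\})$ or for $r\mapsto\mu_{T_n}(\bar B(0,r))$ goes through. So the weaker ``suffices'' claim should be dropped, but the overall argument is sound and matches the paper's intent.
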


The next lemma follows from the main theorem of~\cite{Po}.
\begin{Lemma}\label{L:popa} Let $\M$ be a separable type ${\rm II}_1$ factor. Then there
is a unitary operator $u\in \M^\omega$ such that
\[\{\M, u\M u^*\}''\cong\M * (u\M u^*).
\]
\end{Lemma}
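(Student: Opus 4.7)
The plan is to deduce this lemma from the main theorem of~\cite{Po}, namely Popa's asymptotic freeness theorem for separable type ${\rm II}_1$ factors. That theorem produces, for any such $\M$, unitaries in $\M^\omega$ whose conjugation action sends $\M$ to a subalgebra free from the constant copy of $\M$ inside the ultrapower.

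Concretely, I would proceed as follows. Popa's result supplies a sequence of unitaries $u_n \in \M$ with the property that, for every finite collection of trace-zero elements in $\M$, all alternating moments (as in the freeness definition) between these elements and their $u_n$-conjugates converge to $0$ along $\omega$. A standard diagonal argument against a countable $\|\cdot\|_2$-dense trace-zero subset of $\M$, combined with the properties of the free ultrafilter, lets me package these unitaries into a single $u = [u_n]_\omega \in \M^\omega$.

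Next, I would verify directly that $\M$ and $u\M u^*$ are free in $(\M^\omega, \tau_\omega)$. For an arbitrary alternating product $a_1 b_1 a_2 b_2 \cdots a_k b_k$ with $a_i \in \M$, $b_i = u c_i u^* \in u\M u^*$, and all factors of trace zero, computing $\tau_\omega$ via the limit along $\omega$ on representative sequences reduces the required vanishing to the asymptotic moment condition already arranged in the previous step. Once freeness of $\M$ and $u\M u^*$ in $(\M^\omega, \tau_\omega)$ is established, the universality and uniqueness of the tracial reduced free product of finite von Neumann algebras give $\{\M, u\M u^*\}'' \cong \M \ast (u\M u^*)$ as required.

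The main obstacle is the first step: extracting from~\cite{Po} the precise form of asymptotic freeness needed here. Because $\M$ is not assumed to have property $\Gamma$, one cannot produce asymptotically free conjugates by averaging against a central sequence, and the naive attempt to choose $u_n$ as commuting-up-to-$\|\cdot\|_2$ perturbations fails. Popa's substantive contribution is the direct construction of the unitaries $u_n$ in the non-$\Gamma$ setting that realize freeness of the conjugated algebra in the limit. Once that input is in hand, the assembly of the single ultrapower unitary and the freeness verification are routine computations with $\tau_\omega$.
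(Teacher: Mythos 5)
Your proposal is correct and follows essentially the same route as the paper, which gives no proof of this lemma at all and simply states that it ``follows from the main theorem of~\cite{Po}.'' You have filled in exactly the deduction that the authors leave implicit: extract from Popa's asymptotic-freeness construction a representing sequence of unitaries, pass to the ultraproduct unitary $u=[u_n]_\omega$, verify freeness of $\M$ and $u\M u^*$ under $\tau_\omega$ by computing mixed moments on representatives, and then identify $\{\M,u\M u^*\}''$ with the tracial free product via uniqueness of the trace-preserving free product construction.
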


\begin{Lemma}\label{L:one half} Let $\M$ be a separable type ${\rm II}_1$ factor and let $T\in\M$.
 Then for every $\epsilon>0$, there is a
projection $P\in \M$, $\tau(P)=1/2$, such that
$\|TP-PTP\|_2<\epsilon$.
\end{Lemma}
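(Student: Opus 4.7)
The plan is to build the desired projection inside $\M^\omega$ first, using Popa's free embedding (Lemma 2.3) together with the Haagerup--Schultz projection of Lemma 2.2, and then lift it back to $\M$ along the ultrapower.

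The first task is to realize $\M_1 = \M * \L(\mathbb{F}_4)$ as a subfactor of $\M^\omega$ so that its $\M$-factor coincides with the canonical constant-sequence copy of $\M$. Lemma 2.3 supplies two free copies $\M$ and $u\M u^*$ inside $\M^\omega$; iterating Popa's construction produces several additional free copies of $\M$ inside $\M^\omega$, and a semicircular generator taken from each combines into a circular system $\{x,y\}$ free from $\M$. This realizes $\M_1 \hookrightarrow \M^\omega$ trace-preservingly, with $z = xy^{-1}$ belonging to $L^p(\M^\omega,\tau_\omega)$ for every $0 < p < 1$.

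Applying Lemma 2.2 to $T_n = T + (1/n)z$ produces, for each $n$, a projection $P_n \in \M_1 \subset \M^\omega$ with $\tau_\omega(P_n) = 1/2$ and $(1-P_n)\,T_n\,P_n = 0$. Subtracting the $(1/n)z$ term gives
\[
(1-P_n)\,T\,P_n \;=\; -\frac{1}{n}\,(1-P_n)\,z\,P_n.
\]
Fixing $0 < p < 1$, the bound $\|ABC\|_p \leq \|A\|_\infty \|B\|_p \|C\|_\infty$ yields $\|TP_n - P_n T P_n\|_p \leq \|z\|_p/n \to 0$. On the other hand $(1-P_n)\,T\,P_n$ has operator norm at most $\|T\|$, so the interpolation inequality $\|X\|_2 \leq \|X\|_\infty^{1-p/2}\|X\|_p^{p/2}$ (a one-line consequence of $|X|^2 = |X|^p\,|X|^{2-p} \leq \|X\|_\infty^{2-p}\,|X|^p$) upgrades this to
\[
\|TP_n - P_n T P_n\|_2 \;\leq\; \|T\|^{1-p/2}\,\bigl(\|z\|_p/n\bigr)^{p/2} \longrightarrow 0.
\]

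To finish, each $P_n \in \M^\omega$ is represented by a sequence $(P_{n,k})_k$ of projections in $\M$ of exact trace $1/2$, and the $\|\cdot\|_2$-norm on $\M^\omega$ is the $\omega$-limit of the norms of representatives, so $\|TP_n - P_n T P_n\|_2 = \lim_{k \to \omega}\|TP_{n,k} - P_{n,k}\,T\,P_{n,k}\|_2$. Choosing $n$ so large that the left side is below $\epsilon$, then picking $k$ in the ultrafilter with the analogous bound, yields a projection $P := P_{n,k} \in \M$ with $\tau(P) = 1/2$ and $\|TP - PTP\|_2 < \epsilon$. The main obstacle is the first step: Lemma 2.3 as stated only produces $\M * u\M u^*$, whereas Lemma 2.2 is formulated in $\M * \L(\mathbb{F}_4)$, so one must iterate Popa's construction (or invoke a slightly sharper form) to extract a circular system free from $\M$ inside $\M^\omega$. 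The subsequent $L^p$-to-$L^2$ interpolation and ultrafilter diagonalization are then routine.
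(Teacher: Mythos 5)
Your proposal follows essentially the same route as the paper: realize $\M * \L(\mathbb{F}_4)$ inside an iterated ultrapower via Popa's embedding, apply the Haagerup--Schultz lemma to $T + \tfrac{1}{n}z$ to obtain exact trace-$\tfrac12$ invariant projections $P_n$, interpolate from $L^p$ ($0<p<1$) to $L^2$ to show $\|TP_n - P_nTP_n\|_2 \to 0$, and then lift along the ultrapower to get a projection in $\M$ itself. Two small differences are worth noting, both favorable to your write-up. First, your interpolation step is cleaner than the paper's: you exploit the bound $\|(1-P_n)TP_n\|_\infty \le \|T\|$ directly via $\|X\|_2 \le \|X\|_\infty^{1-p/2}\|X\|_p^{p/2}$, whereas the paper applies Cauchy--Schwarz on the trace and then invokes a somewhat awkward $\|2T\|_{4-p}$ bound to control the $L^{4-p}$ factor. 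Second, you correctly flag the genuine gap that Popa's lemma only produces $\M * u\M u^*$ and that one must iterate (taking one generator from each free copy) to obtain four free semicircular elements giving a circular system $\{x,y\}$ free from $\M$; the paper treats this with the terse phrase ``repeat the above arguments twice if necessary.'' One small cautionary note: each iteration of Popa moves you one level deeper into iterated ultrapowers, so the circular system and the projections $P_n$ live in something like $(\M^\omega)^\omega$ rather than $\M^\omega$; this does not harm the final diagonalization (iterated ultrapowers admit diagonal lifts back to $\M$ just as single ultrapowers do), but it is worth saying explicitly. Also, your reference labels are shifted by one relative to the paper (the Haagerup--Schultz result is Lemma 2.3 there and Popa's is Lemma 2.4), but the content you invoke is correct.
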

\begin{proof}
 Note that $\M$ is a von Neumann subalgebra of $\M^\omega$ if
we identify $T\in \M$ with the constant sequence $(T)\in
\M^\omega$. To prove the lemma, it is sufficient  to show that
there is a projection $P\in \M^\omega$, $\tau(P)=1/2$, such that
$\|TP-PTP\|_2<\epsilon$. By Lemma~\ref{L:popa}, there is a unitary
operator $u\in \M^\omega$ such that $\{\M, u\M u^*\}''\cong\M *
(u\M u^*)$. So it is sufficient to show that  there is a
projection $P\in \{\M,u\M u^*\}''$, $\tau(P)=1/2$, such that
$\|TP-PTP\|_2<\epsilon$. Note that $T\in \M$ and therefore $T$ is
free with $u\M u^*$ in $\{\M,u\M u^*\}''$.
 Repeat the above arguments twice if
necessary, we may assume that
 $\M\supseteq \L(\mathbb{F}_4)$ and $T$ is free with
 $\L(\mathbb{F}_4)$.\\

  We choose a circular system $\{x,y\}$ in
 $\L(\mathbb{F}_4)$. Let $z=xy^{-1}$ and $T_n=T+\frac{1}{n}z$.
By Lemma~\ref{L:H-S}, for every $n\geq 1$, there is a projection
$P_n\in \M$ with $\tau(P_n)=1/2$ and $T_nP_n=P_nT_nP_n$. By
Lemma~\ref{L:H-S}, $\lim_{n\rightarrow\infty}\|T_n-T\|_p^p=0$ for
$0<p<1$. Note that
\begin{eqnarray*}
  \|P_n T P_n-T P_n\|_2^2 &=& \tau(|P_nTP_n-TP_n|^2) \\
   &=& \tau(|P_nTP_n-TP_n|^{p/2}|P_nTP_n-TP_n|^{2-p/2}) \\
   &\leq& \tau(|P_nTP_n-TP_n|^p)^{1/2}\tau(|P_nTP_n-TP_n|^{4-p})^{1/2} \\
   &=& \|P_nTP_n-TP_n\|_p^{p/2} \|P_nTP_n-TP_n\|_{4-p}^{(4-p)/2} \\
   &\leq& \left(\|P_nTP_n-TP_n\|_p^p\right)^{1/2}
   \|2T\|_{4-p}^{(4-p)/2},
\end{eqnarray*}
and
\begin{eqnarray*}
  \|P_nTP_n-TP_n\|_p^p &\leq& \|P_n(T-T_n)P_n-(T-T_n)P_n\|_p^p  \\
   &\leq & \|P_n(T-T_n)P_n\|_p^p+(T-T_n)P_n\|_p^p \\
   &\leq& 2\|T-T_n\|_p^p\rightarrow 0.
\end{eqnarray*}
Therefore, $\lim_{n\rightarrow\infty}\|P_n T P_n-T P_n\|_2^2=0$.

\end{proof}

\begin{Lemma}\label{L:general r} Let $\M$ be a separable type
 ${\rm II}_1$ factor, $T\in\M$ and $\epsilon>0$. For every
 positive integer $n$, there are projections $\{P_j\}_{j=0}^{2^n}$
 in $\M$ such that $0=P_0<P_1< P_2< \cdots<P_{2^n-1}< P_{2^n}=I$,
 $\tau(P_j)=j/2^n$, and $\|TP_j-P_jTP_j\|_2\leq \epsilon$ for all $0\leq j\leq 2^n$.
\end{Lemma}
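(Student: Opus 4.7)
I would prove this by induction on $n$, with Lemma~2.5 serving as both the base case and the engine that drives the inductive step. For $n=0$ the statement is trivial ($P_0=0$, $P_1=I$), and for $n=1$ the middle projection is exactly what Lemma~2.5 produces.

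For the inductive step, given $T\in\M$ and $\epsilon>0$, first invoke Lemma~2.5 with tolerance $\epsilon/2$ to obtain a projection $P\in\M$ with $\tau(P)=1/2$ and $\|TP-PTP\|_2=\|(I-P)TP\|_2<\epsilon/2$. The corners $P\M P$ and $(I-P)\M(I-P)$ are again separable type ${\rm II}_1$ factors, each with its own normalized trace (twice the restriction of $\tau$). I would apply the inductive hypothesis to $PTP\in P\M P$ and to $(I-P)T(I-P)\in (I-P)\M(I-P)$, with a suitably small tolerance $\delta$, to obtain increasing chains $0=P_0^{(1)}<P_1^{(1)}<\cdots<P_{2^{n-1}}^{(1)}=P$ and $0=Q_0^{(1)}<Q_1^{(1)}<\cdots<Q_{2^{n-1}}^{(1)}=I-P$, where the traces step by $1/2^n$. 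Then glue the two chains by setting $P_j:=P_j^{(1)}$ for $0\leq j\leq 2^{n-1}$ and $P_j:=P+Q_{j-2^{n-1}}^{(1)}$ for $2^{n-1}\leq j\leq 2^n$. Monotonicity, the endpoint conditions, and $\tau(P_j)=j/2^n$ are immediate.

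For the error estimate, a two-case analysis suffices. When $j\leq 2^{n-1}$, using $P_j\leq P$, the decomposition $TP_j=PTP\cdot P_j+(I-P)TP\cdot P_j$ together with the identity $P_jTP_j=P_j(PTP)P_j$ gives $\|TP_j-P_jTP_j\|_2\leq \|(I-P)TP\|_2+\|PTP\cdot P_j-P_j(PTP)P_j\|_2$, the first term bounded by $\epsilon/2$ and the second by the inductive estimate for $PTP$ in $P\M P$. When $j>2^{n-1}$, writing $P_j=P+Q$ with $Q=Q_{j-2^{n-1}}^{(1)}\leq I-P$ and expanding $TP_j-P_jTP_j$, the terms involving the off-diagonal block $(I-P)TP$ contribute at most $\epsilon/2$ in $\|\cdot\|_2$, while the ``lower-right'' contribution reduces to $AQ-QAQ$ for $A=(I-P)T(I-P)$, which is controlled by the inductive hypothesis on the second corner. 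Choosing $\delta$ (in the normalized traces) small enough to absorb the $\sqrt{2}$ coming from the trace rescaling and to keep each inductive error below $\epsilon/2$ yields the desired bound $\|TP_j-P_jTP_j\|_2\leq \epsilon$ uniformly in $j$.

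I do not foresee a serious obstacle: this is a standard ``cut-and-recurse'' construction, and the only bookkeeping is tracking the off-diagonal block $(I-P)TP$ at each step and the rescaling of the trace when passing to the corner factors. All the substantive content, namely Lemma~2.5 and the fact that corners of a separable type ${\rm II}_1$ factor are again separable type ${\rm II}_1$ factors, is already in hand.
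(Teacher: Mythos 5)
Your proof is correct and follows essentially the same route as the paper: both treat Lemma~\ref{L:one half} as the base case, cut at trace $1/2$, recurse on the two corner factors (which are again separable type ${\rm II}_1$), glue the resulting chains, and bound the error by splitting it into the off-diagonal block from the first cut plus the corner errors. The paper works out $n=2$ explicitly and waves at the general induction, whereas you spell out the inductive step and the bookkeeping (including the trace rescaling in the corners), but there is no substantive difference.
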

\begin{proof}If $n=1$, then the lemma follows from Lemma~\ref{L:one half}.
 Suppose $n=2$. By Lemma~\ref{L:one half}, there are projections
$P,Q$ in $\M$ such that $\tau(P)=\tau(Q)=1/2$, $P+Q=1$ and
$\|TP-PTP\|_2<\epsilon/2$. Let $a=PTP$, $b=PTQ$, $c=QTP$, and
$d=QTQ$. We can write
\[T=\left(\begin{array}{cc}
a&b\\
c&d \end{array}\right)
\] with respect to the decomposition $I=P+Q$.  Then
$\|c\|_2<\epsilon/2$. Note that both $P\M P$ and $Q\M Q$ are type
${\rm II}_1$ factors. We  apply Lemma~\ref{L:one half} to $a\in
P\M P$ and $b\in Q\M Q$, respectively. There are projections
$P_1\leq P$, $Q_1\leq Q$ such that $\tau(P_1)=\tau(Q_1)=1/4$,
$\|aP_1-P_1aP_1\|_2<\epsilon/2$ and
$\|bQ_1-Q_1bQ_1\|_2<\epsilon/2$. Let $P_0=0$, $P_2=P$,
$P_3=P+Q_1$, and $P_4=I$. Then $0=P_0<P_1<P_2<P_3<P_4=I$ and
$\tau(P_j)=j/4$ for $0\leq j\leq 4$. Simple computations show that
 $\|TP_j-P_jTP_j\|_2\leq \epsilon$ for all $0\leq
j\leq 4$. The general case can be proved by using the induction on
$n$ with similar arguments as the above.
\end{proof}

Combining Lemma~\ref{L:general r} and the noncommutative
H$\ddot{\text{o}}$lder's inequality, we have the following:
\begin{Corollary}\label{C:arbitrary t} Let $\M$ be a separable type ${\rm II}_1$ factor and let $T\in\M$.
 Then for every $\epsilon>0$ and every $t$ with $0\leq t\leq 1$, there is a
projection $P\in \M$, $\tau(P)=t$, such that
$\|TP-PTP\|_2<\epsilon$.

\end{Corollary}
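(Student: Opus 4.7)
The plan is to combine Lemma~\ref{L:general r}'s dyadic trace values with a small trace correction to hit $t$ exactly, controlling the error with the noncommutative H\"older inequality. Given $T\in\M$ and $\epsilon>0$, I would first pick an integer $n$ large enough that $\|T\|\cdot 2^{-n/2}<\epsilon/2$ and then invoke Lemma~\ref{L:general r} with tolerance $\epsilon/2$, producing a chain $0=P_0<P_1<\cdots<P_{2^n}=I$ with $\tau(P_k)=k/2^n$ and $\|TP_k-P_kTP_k\|_2<\epsilon/2$ for every $k$. Taking the unique $j$ with $j/2^n\le t<(j+1)/2^n$, I would then choose a subprojection $Q\le P_{j+1}-P_j$ of trace exactly $t-j/2^n$ (possible because $\M$ is a type~${\rm II}_1$ factor and $\tau(P_{j+1}-P_j)=2^{-n}\ge t-j/2^n$) and set $P=P_j+Q$, a projection of trace exactly $t$.

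The error estimate then splits cleanly. I would write $TP-PTP=(I-P)TP=(I-P)TP_j+(I-P)TQ$ and use $P_j\le P$ (so $(I-P)P_j=0$) to rewrite the first piece as $(I-P)(TP_j-P_jTP_j)$, whose $2$-norm is bounded by $\|TP_j-P_jTP_j\|_2<\epsilon/2$. For the second piece, the noncommutative H\"older inequality $\|AB\|_2\le\|A\|_\infty\|B\|_2$ yields $\|(I-P)TQ\|_2\le\|T\|\cdot\tau(Q)^{1/2}\le\|T\|\cdot 2^{-n/2}<\epsilon/2$, and summing the two bounds delivers $\|TP-PTP\|_2<\epsilon$, as required.

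I do not expect a genuine obstacle. Everything reduces to the existence of a subprojection of prescribed trace — automatic in a type~${\rm II}_1$ factor — together with a H\"older estimate of exactly the same flavour already exploited in the proofs of Lemmas~\ref{L:one half} and~\ref{L:general r}. The only bookkeeping is to balance the tolerance fed into Lemma~\ref{L:general r} against the $2^{-n/2}$ decay of $\|Q\|_2$; this is routine once one sees that the containment $P_j\le P\le P_{j+1}$ kills the cross terms in the splitting of $(I-P)TP$.
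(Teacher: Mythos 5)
Your proof is correct and is exactly the argument the paper intends (the paper only gives the one-line hint ``Combining Lemma~\ref{L:general r} and the noncommutative H\"older's inequality''). The decomposition $P=P_j+Q$ with $P_j\le P\le P_{j+1}$, the splitting $(I-P)TP=(I-P)(TP_j-P_jTP_j)+(I-P)TQ$, and the estimate $\|(I-P)TQ\|_2\le\|T\|\,\tau(Q)^{1/2}\le\|T\|\,2^{-n/2}$ are precisely the intended details.
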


The following lemma extends Lemma~\ref{L:one half} to
arbitrary type ${\rm II}_1$ factors.

\begin{Lemma}\label{L:one half for arbitrary factor} Let $\M$ be a type ${\rm II}_1$ factor and let $T\in\M$.
 Then for every $\epsilon>0$, there is a
projection $P\in \M$, $\tau(P)=1/2$, such that
$\|TP-PTP\|_2<\epsilon$.
\end{Lemma}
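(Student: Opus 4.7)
The plan is to reduce to the separable case handled by Lemma~\ref{L:one half}. Given $T$ in an arbitrary type II$_1$ factor $\M$, I would first produce a separable type II$_1$ subfactor $\N\subseteq\M$ containing $T$. Once such an $\N$ is in hand, applying Lemma~\ref{L:one half} to $T$ viewed as an element of $\N$ produces a projection $P\in\N\subseteq\M$ with trace $1/2$ relative to the unique normal tracial state of $\N$ and with $\|TP-PTP\|_2<\epsilon$. Uniqueness of the normal tracial state on the II$_1$ factor $\N$ guarantees that $\tau|_\N$ coincides with the internal trace of $\N$, so $\tau(P)=1/2$ and the same $\|\cdot\|_2$-inequality (now measured with respect to $\tau$) persists in $\M$.

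The remaining task is to construct $\N$, for which I would use the standard iterated separable-closure argument. Set $\N_0=W^*(T)$, which is separable. Inductively, having built a separable $\N_n\subseteq\M$, choose a $\|\cdot\|_2$-dense countable subset $D_n$ of the unit ball of $\N_n$; for each $d\in D_n$ with $d\notin\CCC I$, exploit the factoriality of $\M$ to pick a unitary $u_d\in\M$ whose commutator $\|[u_d,d]\|_2$ is bounded below by a positive quantity depending on $\|d-\tau(d)I\|_2$. Let $\N_{n+1}$ be the von Neumann subalgebra of $\M$ generated by $\N_n\cup\{u_d:d\in D_n\}$, which remains separable, and finally put $\N=\overline{\bigcup_n\N_n}^{\mathrm{SOT}}$. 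The resulting algebra is separable and contains $T$.

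I expect the main obstacle is verifying that $\N$ is a factor. For any hypothetical non-scalar $z\in Z(\N)$, I would approximate $z$ in $\|\cdot\|_2$ by some $d\in D_n$ for $n$ large; the triangle inequality $\|[u_d,z]\|_2\geq \|[u_d,d]\|_2-2\|z-d\|_2$ combined with the quantitative lower bound on $\|[u_d,d]\|_2$ then yields a non-zero commutator $[u_d,z]$ with $u_d\in\N_{n+1}\subseteq\N$, contradicting the centrality of $z$. Alternatively, this entire step can be bypassed by invoking the downward L\"owenheim--Skolem theorem applied to the continuous first-order theory of tracial II$_1$ factors, which supplies a separable elementary (hence factorial) subalgebra of $\M$ containing $T$ directly.
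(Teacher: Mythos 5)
Your reduction to the separable case is a genuinely different route from the paper's. The paper sets $\N=W^*(T)$ and argues from the structure of the relative commutant $\N'\cap\M$: if $\N'\cap\M$ is diffuse it contains a projection of trace $1/2$ commuting with $T$; otherwise it decomposes $I=P_0+P_1+\cdots$ into a diffuse piece and minimal projections $P_k$ ($k\geq 1$), observes that each corner $\N P_k$ ($k\geq 1$) is automatically a separable ${\rm II}_1$ factor because $(\N P_k)'\cap P_k\M P_k=\cc P_k$, and then applies Corollary~\ref{C:arbitrary t} in one such corner. You instead embed $T$ in a single separable ${\rm II}_1$ subfactor $\N\subseteq\M$ and apply Lemma~\ref{L:one half} directly. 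Your route avoids the case analysis, at the cost of a separable-subfactor lemma the paper never needs to state.

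There is, however, a gap in your iterated-closure construction: it guarantees that $\N$ is a factor, but not that it is of type ${\rm II}_1$, which is what Lemma~\ref{L:one half} requires. If $T$ is a scalar the construction produces $\N=\cc I$, and more generally nothing forces the $u_d$ outside $\N_n$: Dixmier averaging \emph{inside} a matrix factor $M_k(\cc)$ already supplies unitaries of $M_k(\cc)$ meeting your quantitative lower bound, so the tower can stall at a finite-dimensional factor. The fix is trivial --- seed $\N_0$ with a diffuse separable subalgebra, e.g.\ $\N_0=W^*(T,\R)$ with $\R\subseteq\M$ a copy of the hyperfinite ${\rm II}_1$ factor; then $\N$ is an infinite-dimensional finite factor, hence ${\rm II}_1$. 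Your downward L\"owenheim--Skolem alternative does not suffer from this, since being a ${\rm II}_1$ factor is axiomatizable in continuous logic and therefore passes to elementary substructures. The remaining ingredients --- uniqueness of the normal tracial state on $\N$ forcing $\tau|_{\N}$ to coincide with the internal trace, and the Dixmier bound $\sup_u\|[u,d]\|_2\geq\|d-\tau(d)I\|_2$ underlying your factoriality argument --- are correct.
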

\begin{proof}  Let $\N$ be the von
Neumann subalgebra generated by $T$. Then $\N$ is separable. If
$\N'\cap \M$ is a diffuse von Neumann algebra, then for every $t$,
$0\leq t\leq 1$, there is a projection $P\in \N'\cap \M$ such that
$PT=TP$ and $\tau(P)=t$. Hence Lemma~\ref{L:one half for arbitrary
factor} follows. If $\N'\cap \M$ is not a diffuse von Neumann
algebra, let $P_0,P_1,P_2,\cdots$ be a sequence of projections in
$\N'\cap \M$ such that $P_0+P_1+P_2+\cdots =I$, $P_0(\N'\cap
\M)P_0$ is diffuse, and $P_1,P_2,\cdots$ are non-zero minimal
projections in $(1-P_0)(\N'\cap \M)(1-P_0)$. Note that $(\N
P_n)'\cap (P_n\M P_n)=P_n(\N'\cap \M)P_n=\cc P_n$ for $n\geq 1$.
This implies that $\N P_n$ is a separable type ${\rm II}_1$ factor
for $n\geq 1$. There is an $n\geq 0$ such that
$\sum_{k=1}^n\tau(P_k)\leq t \leq \sum_{k=1}^{n+1}\tau(P_k)$.
Applying Corollary~\ref{C:arbitrary t} to $\N P_{n+1}$,
$t'=t-\sum_{k=1}^n\tau(P_k)$, and $T P_{n+1}$, there is a
projection $Q_{n+1}\in \N P_{n+1}$ such that $\tau(Q_{n+1})=t'$
and
\[\|TP_{n+1}Q_{n+1}-Q_{n+1}TP_{n+1}Q_{n+1}\|_2<\epsilon.
\] Let $P=P_0+P_1+\cdots+P_n+Q_{n+1}$. Then $P\in \M$,
$\tau(P)=t$, and
\[\|TP-PTP\|_2<\epsilon.
\]

\end{proof}

As a consequence of Lemma~\ref{L:one half for arbitrary factor},
Lemma~\ref{L:general r} is also true for arbitrary type ${\rm
II}_1$ factors.

\begin{proof}[Proof of Theorem~\ref{T:main result}]
 Let $T=(T_n)\in \M^\omega$.  By Lemma~\ref{L:general
 r}, for each $n$, there are projections $\{P_{n,j}\}_{0\leq j\leq 2^n}$ in $\M$
 such that $0=P_{n,0}<P_{n,1}< P_{n,2}< \cdots<P_{n,2^n-1}< P_{n,2^n}=I$,
 $\tau(P_{n,j})=j/2^n$, and $\|T_nP_{n,j}-P_{n,j}T_nP_{n,j}\|_2\leq 1/n$ for all $0\leq j\leq 2^n$.
For every $t$, $0\leq t\leq 1$, choose $P_{n,j}$ such that
$\tau(P_{n,j})\leq t<\tau(P_{n,j+1})$.  Let $P_t=(P_{n,j})\in
\M^\omega$. Then $P_s\leq P_t$ if $s\leq t$,
$\tau_\omega(P_t)=t$, and $TP_t=P_tTP_t$.\\

\end{proof}

\section{Operators with non-trivial invariant subspaces relative to a type ${\rm II}_1$ factor}

Let $\M$ be a type ${\rm II}_1$ factor with a faithful normal
tracial state $\tau$, and let $\mathfrak{M}=\{S'\in
\M:\,\text{there is a family of projections}\, \{P_t\}_{0\leq
t\leq 1}\, \text{in}\, \M\, \text{such that}\, ZP_t=P_tZP_t,
P_s\leq P_t\, \text{if}\, s\leq t,\, \text{and}\, \tau(P_t)=t\}$.
Let $(\M)_1$ be the set of operators $T$ in $\M$ such that
$\|T\|\leq 1$.  As an application of Theorem~\ref{T:main result},
we prove the following result.

\begin{Theorem}\label{T:dense} For every operator $T\in (\M)_1$ and every $\epsilon>0$,
there is an operator $S\in \mathfrak{M}\cap (\M)_1$ such that
$\|T-S\|_2<\epsilon$. In particular, the set $\mathfrak{M}$ is
dense in $\M$ in the strong operator topology.
\end{Theorem}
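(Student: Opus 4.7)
The plan is to deduce Theorem~\ref{T:dense} from Theorem~\ref{T:main result} by lifting the continuous filtration produced in the ultrapower back to $\M$. View $T\in\M$ as a constant sequence in $\M^\omega$. Theorem~\ref{T:main result} then gives a family $\{P_t\}_{0\le t\le 1}$ of projections in $\M^\omega$ with $TP_t=P_tTP_t$, $P_s\le P_t$ for $s\le t$, and $\tau_\omega(P_t)=t$. Choose representatives $(P_{t,k})_k$ for each $P_t$ as projections of trace $t$ in $\M$. Over a countable dense set of $t$-values the nesting $P_{s,k}\le P_{t,k}$ can be arranged to hold in $\M$ for each $k$, and interpolating within each ``gap'' block $(P_{t,k}-P_{s,k})\M(P_{t,k}-P_{s,k})$ --- itself a type ${\rm II}_1$ factor --- one obtains for each $k$ a full continuous nested family $\{P_{t,k}\}_{t\in[0,1]}$ in $\M$ with $\tau(P_{t,k})=t$.

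For each $k$ let $\mathcal{C}_k=\{X\in(\M)_1:XP_{t,k}=P_{t,k}XP_{t,k}\text{ for every }t\in[0,1]\}$, a $\|\cdot\|_2$-closed convex subset of $(\M)_1$. By construction $\mathcal{C}_k\subset\mathfrak{M}\cap(\M)_1$, since the family $\{P_{t,k}\}_t$ witnesses membership in $\mathfrak{M}$. The analogous set $\mathcal{C}^\omega\subset(\M^\omega)_1$ contains $T$. The strategy is to prove the ultraproduct identification $\mathcal{C}^\omega=\prod^\omega\mathcal{C}_k$, which would yield a sequence $S_k\in\mathcal{C}_k$ with $\|S_k-T\|_2\to 0$ along $\omega$; any $S_k$ satisfying $\|S_k-T\|_2<\epsilon$ then provides $S:=S_k\in\mathfrak{M}\cap(\M)_1$ as required. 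The linear block-upper-triangular piece of the identification is the easy part: given any norm-one representative $(X_k)$ of $T$ in $\M^\omega$, replace each $X_k$ by $\sum_{i\le j}Q_{i,k}X_kQ_{j,k}$ where $Q_{i,k}=P_{i/N,k}-P_{(i-1)/N,k}$; approximate invariance of the lifted filtration makes the discarded strict lower triangular piece $L_k=\sum_{i>j}Q_{i,k}TQ_{j,k}$ satisfy $\|L_k\|_2\to 0$ along $\omega$, so the projected sequence still represents $T$.

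The main obstacle is that triangular truncation is unbounded in operator norm, so the naive block-upper-triangular representative need not remain in $(\M)_1$, and moreover a finite discrete filtration does not suffice to place the limit in $\mathfrak{M}$. I expect to resolve this by a joint iterative refinement: let $N=N(k)$ grow along $\omega$, define $S_k$ at each $k$ as the upper-triangular part of $T$ at discretization level $N(k)$, and then recursively apply the same construction inside each diagonal sub-factor $Q_{i,k}\M Q_{i,k}$ with geometrically decaying $\|\cdot\|_2$-errors so that the resulting operator respects the entire continuous family $\{P_{t,k}\}$. Throughout, the key leverage is that in $\M^\omega$ the element $T$ already has operator norm at most $1$ while satisfying all the upper-triangular constraints, so by the standard representability property (the unit ball of $\M^\omega$ is the ultraproduct of unit balls of $\M$) one can find matching representatives in $\M$; reconciling this with the nested refinement of the filtration, so that both the operator-norm bound and the full continuous upper-triangular condition hold simultaneously, is the principal technical challenge of the proof.
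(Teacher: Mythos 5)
Your high-level strategy (use Theorem~\ref{T:main result} as input, decompose into blocks along a filtration, iterate with geometrically decaying $\|\cdot\|_2$-errors) is in the right spirit, but there is a genuine gap at the two places you yourself flag, and you do not resolve either of them. First, the ``ultraproduct identification'' $\mathcal{C}^\omega=\prod^\omega\mathcal{C}_k$ is not a standard representability fact and cannot simply be invoked. The usual statement --- every element of $(\M^\omega)_1$ lifts to a sequence in $\prod(\M)_1$ --- does not give you a lift that \emph{in addition} lies in $\mathcal{C}_k$; asking for that is precisely asking whether $\mathrm{dist}_{\|\cdot\|_2}(T_k,\mathcal{C}_k)\to 0$ along $\omega$, which is essentially the content of the theorem you are trying to prove. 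You would have to prove this identification from scratch, and the obstruction to it is exactly the operator-norm blowup you describe. Second, that obstruction is real and you do not overcome it. Block upper-triangular truncation is $\|\cdot\|_2$-contractive (orthogonal projection) but not operator-norm-contractive, so projecting a norm-one $T_k$ onto the upper-triangular subspace may leave the unit ball, and rescaling destroys the tightness you need as the discretization refines. Saying ``recursively apply the same construction with geometrically decaying errors'' does not fix this, because the error you need to control is in operator norm, not $\|\cdot\|_2$; compounding $\|\cdot\|_2$-small perturbations gives you no control over the norm of the limit unless you have a separate mechanism.

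The paper supplies that missing mechanism and it is the crux of the argument: Lemma~\ref{L:norm and 2norm} shows that if a block has small $\|\cdot\|_2$, one can cut down by a projection of trace $1-\delta^2/\epsilon^2$ on which the block has small \emph{operator norm}. Thus one does not take the naive upper-triangular truncation; one shrinks the filtration slightly (losing trace $O(\epsilon_1^2/\delta^2)$), replaces the strictly lower-triangular block by zero on that smaller projection where it is $\epsilon_1$-small in norm, and then the modified operator has $\|R\|\le 1+\epsilon_1$; rescaling by $(1+\epsilon_1)^{-1}$ returns to the unit ball at a quantifiable $\|\cdot\|_2$-cost. Iterating this across dyadic levels, with errors summing geometrically and the dyadic filtrations refining, yields in the limit a single $S\in(\M)_1$ with a genuine continuous nested family of exactly invariant projections, entirely inside $\M$ with no need to lift anything from $\M^\omega$. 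Your sketch is missing this spectral-cutoff lemma, and without it neither the unit-ball constraint nor the exact (as opposed to approximate) invariance of a continuum of projections can be achieved simultaneously.
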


To prove Theorem~\ref{T:dense}, we need the following lemmas. The following lemma is well known.
\begin{Lemma}\label{L: unit ball is complete} Suppose
$\{T_n\}_n\subseteq (\M)_1$ is a Cauchy sequence with respect to
$\|\cdot\|_2$. Then there is an operator $T\in (\M)_1$ such that
\[\lim_{n\rightarrow\infty}\|T_n-T\|_2=0.
\]
\end{Lemma}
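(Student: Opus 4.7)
The plan is to exploit completeness of the GNS Hilbert space $L^2(\M,\tau)$ together with weak-$*$ compactness of the unit ball $(\M)_1$, and then to reconcile the two limits that arise.

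First I would note that $\M$ embeds contractively into $L^2(\M,\tau)$, the $2$-norm being the Hilbert-space norm. Since $L^2(\M,\tau)$ is complete, the Cauchy sequence $\{T_n\}$ admits a limit $\xi\in L^2(\M,\tau)$, and the task reduces to showing that $\xi$ is represented by an operator $T\in (\M)_1$.

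Next I would apply Banach--Alaoglu. Viewing $\M$ as the dual of its predual $\M_*$, the unit ball $(\M)_1$ is $\sigma$-weakly compact, so some subnet $\{T_{n_\alpha}\}$ converges $\sigma$-weakly to an operator $T\in (\M)_1$. To identify $T$ with $\xi$, I would test against an arbitrary $a\in\M$: $\sigma$-weak continuity of $x\mapsto\tau(xa)$ yields $\tau(T_{n_\alpha}a)\to\tau(Ta)$, while the $2$-norm convergence together with the Cauchy--Schwarz inequality on $L^2(\M,\tau)$ yields $\tau(T_{n_\alpha}a)=\langle T_{n_\alpha},a^*\rangle_{L^2}\to\langle \xi,a^*\rangle_{L^2}$. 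Hence $\langle T,a^*\rangle_{L^2}=\langle \xi,a^*\rangle_{L^2}$ for every $a\in\M$, and since $\M$ is $\|\cdot\|_2$-dense in $L^2(\M,\tau)$ this forces $T=\xi$ as elements of $L^2(\M,\tau)$. Consequently $\|T_n-T\|_2=\|T_n-\xi\|_2\to 0$, as desired.

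There is no serious obstacle; the only point requiring any care is the interplay of the $2$-norm topology (in which the sequence is Cauchy) and the $\sigma$-weak topology (in which the uniform bound $\|T_n\|\le 1$ delivers a limit lying inside $\M$). One may bypass the appeal to Banach--Alaoglu entirely by invoking the standard fact that on norm-bounded subsets of a finite von Neumann algebra the strong operator topology coincides with the $\|\cdot\|_2$ topology and that $(\M)_1$ is strong-operator closed; either route gives the lemma.
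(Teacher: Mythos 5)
Your proof is correct. The paper itself offers no argument for this lemma, simply labeling it as well known, so there is no paper proof to compare against; your route (take the $L^2(\M,\tau)$-limit $\xi$ by completeness, extract a $\sigma$-weak cluster point $T\in(\M)_1$ by Banach--Alaoglu applied to $(\M)_1=(\M_*)^*$, and identify $T=\xi$ by pairing against $\M$ and using density of $\M$ in $L^2$) is the standard one and is sound. One minor caution about the ``bypass'' you sketch at the end: the identification of the $\|\cdot\|_2$-topology with the strong operator topology on bounded sets is a statement about the standard representation of $\M$ on $L^2(\M,\tau)$, and even there a bounded $\|\cdot\|_2$-Cauchy sequence being SOT-convergent (rather than merely SOT-Cauchy) still requires an argument essentially equivalent to what you already did via Banach--Alaoglu; so the first argument is really the self-contained one, and the ``shortcut'' is more of a restatement than a genuine simplification.
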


For an operator $T\in \M$, let $N(T)$ be the projection onto the
kernel space of $T$.
\begin{Lemma}\label{L:norm and 2norm} Let $\epsilon, \delta>0$ and $T\in \M$.
 If $\|T\|_2<\delta$, then there is a projection $P\in \M$ such
that $P\geq N(T)$, $\|TP\|\leq \epsilon$, and
$\tau(I-P)<\delta^2/\epsilon^2$.
\end{Lemma}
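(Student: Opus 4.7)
The plan is to construct $P$ as a spectral projection of $|T| = (T^*T)^{1/2}$, which makes this essentially a noncommutative Chebyshev inequality. All three desired properties of $P$ follow cleanly from the spectral calculus together with the trace bound $\tau(T^*T) = \|T\|_2^2 < \delta^2$.

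More precisely, I would set $P := E_{|T|}([0,\epsilon])$, the spectral projection of $|T|$ corresponding to the interval $[0,\epsilon]$. Since the kernel projection $N(T) = N(|T|)$ is exactly the spectral projection of $|T|$ at $\{0\}$, and $\{0\} \subseteq [0,\epsilon]$, we immediately get $P \geq N(T)$. For the operator norm bound, the spectral calculus gives $\| |T| P \| \leq \epsilon$, so
\[
\|TP\|^2 = \|(TP)^*(TP)\| = \|P T^*T P\| = \| |T|^2 P\| \leq \epsilon^2,
\]
which yields $\|TP\| \leq \epsilon$.

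The third property is the Chebyshev estimate. Writing $I - P = E_{|T|}((\epsilon,\infty))$ and using the spectral integral representation of the trace,
\[
\delta^2 > \|T\|_2^2 = \tau(|T|^2) = \int_0^{\|T\|} \lambda^2 \, d\tau(E_{|T|}([0,\lambda])) \geq \int_{(\epsilon,\infty)} \lambda^2 \, d\tau(E_{|T|}) \geq \epsilon^2\, \tau(I-P),
\]
from which $\tau(I-P) < \delta^2/\epsilon^2$ follows.

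There is no real obstacle here; the only thing to be careful about is keeping the inequality $\tau(I-P) < \delta^2/\epsilon^2$ strict, which is automatic because the hypothesis $\|T\|_2 < \delta$ is strict and enters the chain of inequalities above at the very first step. The argument is entirely internal to the abelian von Neumann algebra generated by $|T|$, so no special property of the type ${\rm II}_1$ factor $\M$ is used beyond the existence of a faithful normal tracial state and the fact that spectral projections of self-adjoint operators in $\M$ belong to $\M$.
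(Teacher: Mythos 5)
Your proof is correct and follows essentially the same route as the paper: take $P$ to be the spectral projection $\chi_{[0,\epsilon]}$ of $|T|$ and apply the Chebyshev/Markov estimate $\epsilon^2\tau(I-P)\le\int_{(\epsilon,\infty)}\lambda^2\,d\tau(E_{|T|})\le\|T\|_2^2<\delta^2$. The only cosmetic difference is that the paper first invokes polar decomposition to reduce to $T\ge0$, whereas you work directly with $|T|$ and recover $\|TP\|\le\epsilon$ via the C*-identity $\|TP\|^2=\|PT^*TP\|=\||T|^2P\|$; these are interchangeable.
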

\begin{proof}By applying the polar decomposition theorem, we may assume
that $T$ is a positive operator. Let $\nu$ be the Borel measure on
$[0,\infty)$ induced by the composition of $\tau$ with the
spectral projections of $T$. Then
\[\|T\|_2^2=\int_0^\infty t^2d\nu(t)<\delta^2.
\] Let  $P=\chi_{[0,\epsilon]}(T)$. Then $P\geq N(T)$, $\|TP\|\leq
\epsilon$ and
\[\epsilon^2\tau(I-P)\leq \int_\epsilon^\infty t^2 d\nu(t)\leq
\|T\|_2^2< \delta^2.
\] Hence, $\tau(I-P)<\delta^2/\epsilon^2$.
\end{proof}

\begin{Lemma}\label{L:approximate by half} For every operator $T\in (\M)_1$ and every $\epsilon>0$,
there is an operator $S\in  (\M)_1$ such that
\begin{enumerate}
\item $\|T-S\|_2<\epsilon$ and

\item there is a projection $P\in \M$ such that $\tau(P)=1/2$ and
$SP=PSP$.
\end{enumerate}
\end{Lemma}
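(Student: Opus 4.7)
The plan is to find an approximate invariant projection using Lemma~\ref{L:one half for arbitrary factor}, then modify $T$ into a nearby operator having that projection as an exact invariant, while carefully preserving the operator-norm bound. Fix a small $\delta>0$ to be determined, and apply Lemma~\ref{L:one half for arbitrary factor} to obtain $P\in\M$ with $\tau(P)=1/2$ and $\|C\|_2<\delta$, where $C:=(I-P)TP$. The naive truncation $T-C$ already satisfies $(I-P)(T-C)P=0$, but its operator norm can be as large as $2$, so a subtler construction is needed.

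Apply Lemma~\ref{L:norm and 2norm} to the positive operator $|C|=(C^*C)^{1/2}$, which satisfies $\|\,|C|\,\|_2=\|C\|_2<\delta$, with threshold $\eta:=\sqrt{\delta}$. This produces a projection $Q\in\M$ with $Q\geq N(|C|)$, $\|\,|C|Q\|\leq\eta$, and $\tau(I-Q)<\delta^2/\eta^2=\delta$. Since the support of $|C|$ lies in $P$, $N(|C|)\geq I-P$, so $e:=I-Q$ is a projection with $e\leq P$ and $\tau(e)<\delta$; taking the polar decomposition $C=V|C|$ gives $\|CQ\|\leq\eta$ as well. Define
\[S_3:=PT(I-e)+(I-P)T(I-P).\]
One checks directly that $(I-P)S_3P=0$. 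The identity $T-S_3=C+PTe$ together with the orthogonality $C^*(PTe)=PT^*(I-P)PTe=0$ gives
\[\|T-S_3\|_2^2=\|C\|_2^2+\|PTe\|_2^2<\delta^2+\tau(e)<2\delta.\]

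The main obstacle is bounding $\|S_3\|$. A direct computation using $P(I-P)=0$ yields
\[S_3^*S_3=(I-e)T^*PT(I-e)+(I-P)T^*(I-P)T(I-P).\]
Writing $d:=(I-P)T(I-P)$ and $R:=(I-P)T(I-e)=CQ+d$, the second summand equals $d^*d$, while the first equals $(I-e)T^*T(I-e)-R^*R$; expanding $R^*R$ and simplifying gives
\[S_3^*S_3=(I-e)T^*T(I-e)-(CQ)^*(CQ)-(CQ)^*d-d^*(CQ).\]
Since $(I-e)T^*T(I-e)\leq I$ and $(CQ)^*(CQ)\geq 0$, and the cross term has operator norm at most $2\|CQ\|\,\|d\|\leq 2\eta$, we obtain $\|S_3\|^2\leq 1+2\eta$, hence $\|S_3\|\leq 1+\eta$. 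Set $S:=S_3/\max(\|S_3\|,1)$; then $S\in(\M)_1$, the relation $(I-P)SP=0$ is preserved by scaling, and
\[\|T-S\|_2\leq\|T-S_3\|_2+(\|S_3\|-1)_+\|S_3\|_2\leq\sqrt{2\delta}+\eta(1+\sqrt{2\delta}),\]
which is less than $\epsilon$ once $\delta$ is small enough. The crucial point is that $\|S_3\|$ stays close to $1$ because $CQ$ is small in \emph{operator} norm (not merely in $\|\cdot\|_2$), which is precisely what Lemma~\ref{L:norm and 2norm} provides.
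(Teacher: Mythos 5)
Your proof is correct and the construction is essentially the same as the paper's. In the block decomposition $I = P + (I-P)$, your operator $S_3 = PT(I-e) + (I-P)T(I-P)$ is precisely the paper's $R = T_{11}P_1' + T_{12} + T_{22}$ with $P_1' = P - e$, and you reach it by the same two lemmas. The only place you diverge is in bounding $\|S_3\|$: you compute $S_3^*S_3$ and estimate the resulting blocks, which works but is more labor than needed. The paper gets the same bound by noting $S_3 = T(I-e) - C(I-e) = TQ - CQ$ and applying the triangle inequality directly, since $\|TQ\|\leq 1$ and $\|CQ\|\leq\eta$; that identity is worth keeping in mind, as it also makes the invariance $(I-P)S_3P=0$ transparent.
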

\begin{proof} Choose $\delta,\epsilon_1>0$ such that
\[\epsilon_1+
 \epsilon_1/\delta+\delta<\epsilon.\] By Corollary~\ref{C:main result}, there is a projection $P_1$
in $\M$ such that
\begin{equation}\label{E:delta}
\|TP_1-P_1TP_1\|_2<\delta.
\end{equation}
 Let $P_2=I-P_1$ and $T_{ij}=P_iTP_j$ for $i,j=1,2$. Then we can
 write
 \[T=\left(\begin{array}{cc}
 T_{11}&T_{12}\\
 T_{21}&T_{22}
 \end{array}\right)
 \] with respect to the decomposition $I=P_1+P_2$.
 Since $\|T\|\leq 1$, $\|T_{ij}\|\leq 1$ for all $i,j=1,2$. Note
 that
 (~\ref{E:delta}) implies  $\|T_{21}\|_2<\delta$ and also note that $N(T_{2,1})\geq
 P_2$. By Lemma~\ref{L:norm and 2norm}, there is a projection
 $Q\in \M$, $Q\geq P_2$, $\|T_{21}Q\|\leq \epsilon_1$ and
 $\tau(I-Q)< \epsilon_1^2/\delta^2$. Write $Q=P_1'+P_2$. Then
 $P_1'\leq P_1$ and $\tau(P_1-P_1')< \epsilon_1^2/\delta^2$. \\

 Let $R=T_{11}P_1'+T_{12}+T_{22}$, i.e.,  we can write
 \[R=\left(\begin{array}{cc}
 T_{11}P_1'&T_{12}\\
 0&T_{22}
 \end{array}\right)
 \] with respect to the decomposition $I=P_1+P_2$. Then
 $R-TQ=T_{21}Q$. Therefore,
 \begin{equation}\label{E:norm}
 \|R\|=\|TQ+T_{21}Q\|\leq 1+ \epsilon_1.
 \end{equation} On the other hand, $R-T=T_{11}(P_1-P_1')+T_{21}$. This implies
 that
 \begin{equation}\label{E:two norm}
 \|R-T\|_2\leq \|T_{11}(P_1-P_1')\|_2+\|T_{21}\|_2\leq
 \epsilon_1/\delta+\delta.
 \end{equation}

 Let $S=(1+\epsilon_1)^{-1} R$. Then (~\ref{E:norm})
 implies that $\|S\|\leq 1$ and (\ref{E:two norm}) implies that
 \[\|S-T\|_2\leq \|S-R\|_2+\|R-T\|_2\leq
 \epsilon_1\|S\|_2+  \epsilon_1/\delta+\delta\leq  \epsilon_1+
 \epsilon_1/\delta+\delta<\epsilon.
 \] Note that $SP_1=P_1SP_1$ and $\tau(P_1)=1/2$. Let $P=P_1$.
 We prove the lemma.

\end{proof}

\begin{proof}[Proof of Theorem~\ref{T:dense}] We use the induction
to construct operators $T_n$ and $\{P_{n,j}\}_{j=1}^{2^n}$ for
each $n\geq 0$ satisfying the following conditions:
\begin{enumerate}
\item for each $n$, $\{P_{n,j}\}_{j=1}^{2^n}$ is a family of
projections in $\M$ such that $\sum_{j=1}^{2^n}P_{n,j}=I$ and
$\tau(P_{n,j})=1/2^n$ for $1\leq j\leq 2^n$;

\item $P_{n,j}=P_{n+1,2j-1}+P_{n+1,2j}$ for $1\leq j\leq 2^n$;

\item $\|T_n\|\leq 1$, $T_0=T$, and
$\|T_n-T_{n+1}\|_2<\epsilon/2^{n+1}$;

\item  for each $k$, $1\leq k\leq 2^n$, $\sum_{j=1}^k P_{n,j}$ is
an invariant subspace of $T_n$.
\end{enumerate}
\vskip 1cm

For $n=0$, let $T_0=T$ and $P_{0,1}=I$. For $n=1$, by
Lemma~\ref{L:approximate by half}, there is an operator $S\in \M$,
$\|S\|\leq 1$, $\|S-T\|_2<\epsilon/2$ and there is a projection
$P\in \M$, $\tau(P)=1/2$ and $SP=PSP$. Let $T_1=S$, $P_{1,1}=P$
and $P_{1,2}=I-P$. Now for $n=2$, we construct $T_2$ and
$\{P_{2,j}\}_{j=1}^4$ satisfying the above conditions 1,2,3 and 4.
\vskip 1cm

Since $P_{1,1}$ is an invariant subspace of $T_1$, we can write
\[T_1=\left(\begin{array}{cc}
 A&T_{12}\\
 0&B
 \end{array}\right)
 \] with respect to the decomposition $I=P_{1,1}+P_{1,2}$. Let
 $\epsilon_1,\delta>0$ such that
 \[\epsilon_1+3\epsilon_1/\delta+2\delta<\epsilon/4.
 \] Applying Corollary~\ref{C:main result} to $A\in
 P_{1,1}\M P_{1,1}$ and $B\in P_{1,2}\M P_{1,2}$, there are
 projections $Q_1,Q_2,Q_3,Q_4$ such that $\tau(Q_j)=1/4$ for $1\leq j\leq 4$, $Q_1+Q_2=P_{1,1}$,
 $Q_3+Q_4=P_{1,2}$, $\|AQ_1-Q_1AQ_1\|_2<\delta$ and
 $\|BQ_3-Q_3BQ_3\|_2<\delta$. Now we can write
 \[T_1=\left(\begin{array}{cc}
\left(\begin{array}{cc} A_{11}&A_{12}\\
A_{21}&A_{22}
\end{array}\right) &T_{12}\\
0&\left(\begin{array}{cc} B_{11}&B_{12}\\
B_{21}&B_{22}
\end{array}\right)
\end{array}\right)
 \] with respect to the decomposition $I=Q_1+Q_2+Q_3+Q_4$.
Note that $\|AQ_1-Q_1AQ_1\|_2<\delta$ implies
$\|A_{21}\|_2<\delta$ and $\|BQ_3-Q_3BQ_3\|_2<\delta$ implies
$\|B_{21}\|_2<\delta$.  By Lemma~\ref{L:norm and 2norm} and
similar arguments as the proof of Lemma~\ref{L:approximate by
half}, there are projections  $Q_1'\leq Q_1$, $Q_3'\leq Q_3$ such
that $\|A_{21}Q_1'\|<\epsilon_1$, $\|B_{21}Q_3'\|<\epsilon_1$,
 $\tau(Q_1-Q_1')\leq \epsilon_1^2/\delta^2$ and $\tau(Q_3-Q_3')\leq
 \epsilon_1^2/\delta^2$.
\vskip 1cm

Let
\[R=\left(\begin{array}{cc}
\left(\begin{array}{cc} A_{11}Q_1'&A_{12}\\
0&A_{22}
\end{array}\right) &T_{12}(Q_3'+Q_4)\\
0&\left(\begin{array}{cc} B_{11}Q_3'&B_{12}\\
0&B_{22}
\end{array}\right)
\end{array}\right)
 \] with respect to the decomposition $I=Q_1+Q_2+Q_3+Q_4$. Then
\[\|R-T_1(Q_1'+Q_2+Q_3'+Q_4)\|=\|A_{21}Q_1'+B_{21}Q_3'\|<\epsilon_1
\]
and
\[\|R-T_1\|_2=\|A_{11}(Q_1-Q_1')+A_{21}+T_{12}(Q_3-Q_3')+B_{11}(Q_3-Q_3')+B_{21}\|_2\leq
3\epsilon_1/\delta+2\delta.
\]
Therefore, \[\|R\|\leq
\|T_1(Q_1'+Q_2+Q_3'+Q_4)\|+\|R-T_1(Q_1'+Q_2+Q_3'+Q_4)\|<
1+\epsilon_1.\] Let $T_2=(1+\epsilon_1)^{-1}R$.
 Then $\|T_2\|\leq 1$ and \[
 \|T_2-T_1\|_2\leq
 \|T_2-R\|_2+\|R-T_1\|_2<\epsilon_1\|T_2\|+3\epsilon_1/\delta+2\delta<
 \epsilon_1+3\epsilon_1/\delta+2\delta<\epsilon/4.\] Let
 $P_{2,j}=Q_j$ for $1\leq j\leq 4$. Then $T_2$ and $\{P_{2,j}\}_{j=1}^4$ satisfy the
 conditions 1,2,3 and 4. The general case can be proved similarly
 by using the induction.

 \vskip 1cm
 Suppose  $T_n$ and $\{P_{n,j}\}_{j=1}^{2^n}$ satisfy the above
conditions 1,2,3 and 4. By 3 and Lemma~\ref{L: unit ball is
complete}, there is an operator $S\in (\M)_1$ such that
$\lim_{n\rightarrow\infty}\|S-T_n\|_2=0$ and $\|S-T\|_2<
\epsilon$. By 2 and 4, for each $n$ and $k$, $1\leq k\leq 2^n$,
$\sum_{j=1}^k P_{n,j}$ is an invariant subspace of $T_N$ for
$N\geq n$ and therefore an invariant subspace of $S$. By 1,
$\tau(\sum_{j=1}^k P_{n,j})=k/2^n$. Note that $\{k/2^n:n\geq 0,
1\leq k\leq 2^n\}$ is dense in $[0,1]$. For every $t$, $0\leq
t\leq 1$, let \[\displaystyle{P_t=\bigvee_{k/2^n\leq
t}\left(\sum_{j=1}^k P_{n,j}\right)}.\] By 1, $P_s\leq P_t$ if
$s\leq t$, $\tau(P_t)=t$ and $SP_t=P_tSP_t$.

\end{proof}

\section{$\prod^\omega M_n(\cc)$ is not $\ast$-isomorphic to $\R^\omega$}

Throughout this section $\M$ is a separable type ${\rm II}_1$
factor. Recall that a separable type ${\rm II}_1$ factor $\M$ has
property $\Gamma$ if for every $n$, $T_1,\cdots, T_n\in \M$, and
every $\epsilon>0$, there is a projection $P\in \M$ such that
$\tau(P)=1/2$ and $\|T_iP-PT_i\|_2<\epsilon$ (cf.~\cite{Di}).

\begin{Lemma}\label{L:operators in ultrapower of gamma}
 Suppose $\M$ has property $\Gamma$. Then for every
operator $T\in \M^\omega$ and $t$, $0\leq t\leq 1$, there is a
projection $P\in \M^\omega$ such that $PT=TP$ and
$\tau_\omega(P)=1/2$.
\end{Lemma}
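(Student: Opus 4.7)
The plan is to realize $P$ in the form $P = (P_n)$, where each $P_n \in \M$ is a projection that approximately commutes with $T_n$ in a chosen representing sequence $T = (T_n)$ for $T$. Once I have $\tau(P_n) \to 1/2$ and $\|P_n T_n - T_n P_n\|_2 \to 0$ along $\omega$, the definitions of $\tau_\omega$ and the ultrapower $2$-norm immediately yield $\tau_\omega(P) = 1/2$ and $PT = TP$. So the whole lemma reduces to constructing such a representing sequence termwise.

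The heart of the argument is a single-operator statement at the level of $\M$: if $\M$ has property $\Gamma$, then for every $S \in \M$, every $\epsilon > 0$, and every $t \in [0,1]$, there is a projection $Q \in \M$ with $\|QS - SQ\|_2 < \epsilon$ and $|\tau(Q) - t| < \epsilon$. To establish this I would invoke the Dixmier theorem cited in the Introduction: for the separable factor $\M$, property $\Gamma$ is equivalent to non-triviality of $\M' \cap \M^\omega$, and Dixmier's result then forces $\M' \cap \M^\omega$ to be non-atomic, hence to contain a projection $\widetilde E$ of every prescribed trace $t$. Lifting $\widetilde E$ to a representing sequence $(E_k)$ of projections in $\M$ (via spectral calculus applied to a self-adjoint lift, which is standard in the finite factor $\M^\omega$) yields $\tau(E_k) \to t$ and $\|E_k S - S E_k\|_2 \to 0$ along $\omega$; any index where both approximations are better than $\epsilon$ supplies the desired $Q$.

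Granted the sub-lemma, the conclusion falls out by a diagonal construction. For $T = (T_n) \in \M^\omega$, apply the sub-lemma to the element $T_n \in \M$ with target trace $t = 1/2$ and tolerance $\epsilon_n = 1/n$, producing a projection $P_n \in \M$ with $\tau(P_n) \to 1/2$ and $\|P_n T_n - T_n P_n\|_2 < 1/n$. Then $P = (P_n)$ defines a projection in $\M^\omega$ with $\tau_\omega(P) = \lim_\omega \tau(P_n) = 1/2$, and $\|PT - TP\|_2 = \lim_\omega \|P_n T_n - T_n P_n\|_2 = 0$. The same argument with $1/2$ replaced by an arbitrary $t \in [0,1]$ matches the parameter $t$ introduced in the statement.

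The step I expect to be the main obstacle is the lifting of the projection $\widetilde E \in \M' \cap \M^\omega$ to a sequence of genuine projections in $\M$ that simultaneously carries the approximate trace and the approximate commutation with $S$ across from $\M^\omega$ back to $\M$. This lifting involves two distinct approximations --- a spectral projection of a self-adjoint lift (to ensure projectivity), and an index selection from an $\omega$-large set (to secure both $|\tau(E_k) - t| < \epsilon$ and $\|E_k S - S E_k\|_2 < \epsilon$) --- and one must check that these two steps can be performed simultaneously without one spoiling the other. Everything else in the argument is a routine application of the definition of $\M^\omega$.
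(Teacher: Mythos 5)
Your proof is correct, and the overall diagonal structure (produce $P_n$ termwise, then set $P=(P_n)$) is identical to the paper's. The difference lies entirely in how you obtain, for a single $S\in\M$, a projection that approximately commutes with $S$ and has prescribed trace. The paper reads this off immediately from Dixmier's characterization of property $\Gamma$, which is stated verbatim in the paragraph preceding the lemma: for finitely many $T_1,\dots,T_n\in\M$ and $\epsilon>0$ there exists a projection $P$ with $\tau(P)=1/2$ and $\|T_iP-PT_i\|_2<\epsilon$. You instead detour through the central sequence algebra $\M'\cap\M^\omega$: non-atomicity (Dixmier) yields a projection $\widetilde E$ of trace $t$ commuting with $\M$, and you lift $\widetilde E$ to a sequence of projections in $\M$. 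This is more work, and the worry you flag about the lifting is not actually an obstacle: once the self-adjoint lift is replaced by its spectral projection to get a bona fide sequence of projections $(E_k)$ representing $\widetilde E$, both $\tau(E_k)\to t$ and $\|E_kS-SE_k\|_2\to 0$ along $\omega$ follow automatically from $\tau_\omega(\widetilde E)=t$ and $\widetilde E S=S\widetilde E$; choosing an index is then just intersecting two $\omega$-large sets. What your route buys is the arbitrary-$t$ conclusion, which in fact better matches the lemma's statement (which introduces $t$ but then, in what is evidently a typo, asserts $\tau_\omega(P)=1/2$); the paper's proof, relying on the $1/2$-Dixmier formulation, proves only the $t=1/2$ case.
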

\begin{proof} Write $T=(T_n)$. Since $\M$ has property $\Gamma$,
there exists a projection $P_n\in \M$ such that
$\|P_nT_n-T_nP_n\|_2<1/n$ and $\tau(P_n)=1/2$. Let $P=(P_n)\in
\M^\omega$. Then $PT=TP$ and $\tau_\omega(P_n)=1/2$.
\end{proof}

  Let  $(M_n(\cc))_1$ be the set of
matrices $T\in M_n(\cc)$ such that $\|T\|\leq 1$,  and let
$\nu((M_n(\cc))_1,\omega)$ be the covering number of
$(M_n(\cc))_1$ with respect to the normalized trace norm
$\|\cdot\|_2$. There are universal constants
$c_1,c_2$~\cite{Sz1,Sz2} such that
\begin{equation}\label{E:covering number of unit ball}
\left(\frac{c_1}{\omega}\right)^{2n^2}\leq
\nu((M_n(\cc))_1,\omega)\leq
\left(\frac{c_2}{\omega}\right)^{2n^2}.
\end{equation}
\vskip 1cm

The next lemma follows from  Theorem 9 of Herrero and
 Szarek~\cite{H-S} (also see~\cite{Von}). For the sake of completeness, we include a
direct proof.
\begin{Lemma}\label{L:H-S}There exists a universal constant
$\alpha>0$ with the following property: for each $n\geq 2$, there
exists a matrix $T_n\in M_n(\cc)$, $\|T_n\|=1$, such that
\[\|PT_n-T_nP\|_2\geq \alpha
\] for every  projection $P\in M_n(\cc)$ with
 ${\rm rank}P=\left[\frac{n}{2}\right]$, where
 $\left[\frac{n}{2}\right]$ is the maximal integer less or equal
 to $\frac{n}{2}$.
\end{Lemma}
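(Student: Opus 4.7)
The plan is a covering/counting contradiction: if no such $T_n$ existed, the unit ball $(M_n(\cc))_{1}$ could be covered too efficiently in the $\|\cdot\|_{2}$-metric, violating the lower bound in~(\ref{E:covering number of unit ball}). Suppose, for contradiction, that for some $\alpha>0$ independent of $n$ every $T\in(M_n(\cc))_{1}$ satisfies $\|PT-TP\|_{2}<\alpha$ for some rank-$k$ projection $P$, where $k=[n/2]$; setting $N(P,r)=\{T\in(M_n(\cc))_{1}:\|PT-TP\|_{2}<r\}$, this says $(M_n(\cc))_{1}=\bigcup_{P}N(P,\alpha)$.

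For fixed $P$ I would decompose $T=PTP+P^{\perp}TP^{\perp}+PTP^{\perp}+P^{\perp}TP$ and use the identity $\|PT-TP\|_{2}^{2}=\|PTP^{\perp}\|_{2}^{2}+\|P^{\perp}TP\|_{2}^{2}$ to split $N(P,r)$ into a ``diagonal'' part ranging over the unit ball of $M_{k}(\cc)\oplus M_{n-k}(\cc)$ and an ``off-diagonal'' part constrained to a $\|\cdot\|_{2}$-ball of radius $r$ in the $4k(n-k)$-dimensional real space $PM_n(\cc)P^{\perp}\oplus P^{\perp}M_n(\cc)P$. Applying~(\ref{E:covering number of unit ball}) to the diagonal and the standard Euclidean estimate to the off-diagonal (where the ratio of radii is bounded) yields an $\alpha$-net for $N(P,r)$ of size at most $C^{n^{2}}\alpha^{-2(k^{2}+(n-k)^{2})}$ for a universal $C$. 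To reduce to finitely many $P$'s, I would cover the rank-$k$ Grassmannian, a $2k(n-k)$-dimensional real submanifold of $M_n(\cc)$, by a $\|\cdot\|_{2}$-$\alpha$-net of size at most $C^{n^{2}}\alpha^{-2k(n-k)}$; the triangle inequality $\|P'T-TP'\|_{2}\le\|PT-TP\|_{2}+2\|P-P'\|_{2}$ for $\|T\|\le 1$ then ensures $(M_n(\cc))_{1}$ is covered by $\bigcup_{i}N(P_{i},3\alpha)$ over the net. Multiplying the two estimates yields an $\alpha$-net of $(M_n(\cc))_{1}$ of total size at most
\[
C'^{\,n^{2}}\,\alpha^{-2(n^{2}-k(n-k))}.
\]

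Comparing with the lower bound $(c_{1}/\alpha)^{2n^{2}}$ from~(\ref{E:covering number of unit ball}) forces $\alpha^{2k(n-k)}\ge c_{1}^{2n^{2}}/C'^{\,n^{2}}$, and since $k(n-k)\ge(n^{2}-1)/4$ when $k=[n/2]$, this forces $\alpha\ge\alpha_{0}$ for a universal $\alpha_{0}>0$. Choosing any positive $\alpha<\alpha_{0}$ thus contradicts the hypothesis and produces $T\in(M_n(\cc))_{1}$ with $\|PT-TP\|_{2}\ge\alpha$ for every rank-$k$ projection $P$; since $T=0$ would satisfy the hypothesis, $T\ne 0$, and the normalization $T_{n}:=T/\|T\|$ has $\|T_{n}\|=1$ and retains the same bound. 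The main obstacle is the exponent comparison: the upper-bound exponent $2(n^{2}-k(n-k))$ must fall short of the lower bound $2n^{2}$ by an amount of order $n^{2}$, so that the resulting $\alpha_{0}$ does not depend on $n$; this happens precisely because $k(n-k)=\Theta(n^{2})$ for $k=[n/2]$, and the same argument would fail if $P$ were required to have some fixed proper rank independent of $n$.
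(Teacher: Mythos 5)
Your proof follows the same covering-number contradiction as the paper: assume the lemma fails, use the hypothetical almost-reducing projections to produce a cheap $\|\cdot\|_2$-net of $(M_n(\cc))_1$, and compare with the Szarek-type lower bound on covering numbers. The one substantive difference is that you explicitly include a net on the rank-$k$ Grassmannian. The paper's written argument takes an $\epsilon$-net only of the two diagonal corner blocks and asserts $\nu((M_n(\cc))_1,2\epsilon)\leq(c_2/\epsilon)^{4k^2}$, but the approximant $PT_{t_1}P+(I-P)T_{t_2}(I-P)$ depends on the (continuously varying) projection $P$, so a net over projections is in fact needed to get a finite covering; your proposal supplies it. Including the Grassmannian term raises the covering exponent from $n^2$ to $2\bigl(n^2-k(n-k)\bigr)\approx\tfrac{3}{2}n^2$, which still falls strictly below the lower-bound exponent $2n^2$, so the contradiction survives -- exactly as you observe via $k(n-k)\ge(n^2-1)/4$. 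Your version is thus a more complete rendering of the same argument, and it also handles odd $n$ directly rather than reducing to $n=2k$.
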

\begin{proof} Suppose the lemma is false. Then for every
$\epsilon> 0$, there is an $n\geq 2$, for every matrix $T\in
M_n(\cc)$, $\|T\|\leq 1$, there is a projection $P\in M_n(\cc)$
such that ${\rm rank}P=\left[\frac{n}{2}\right]$ and
$\|PT-TP\|_2<\epsilon$. Without of loss of generality we may
assume that $n=2k$. Let $(M_n(\cc))_1$ be the set of $n\times n$
complex matrices $T$ such that $\|T\|\leq 1$. For $T\in M_n(\cc)$,
let $\|T\|_2$ be the trace norm with respect to the normalized
trace $\tau_n=\frac{Tr}{n}$ on $M_n(\cc)$. \vskip 1cm

 By (\ref{E:covering number of unit ball}),
\begin{equation}\label{E:lower bound}
\left(\frac{c_1}{2\epsilon}\right)^{2n^2}\leq
\nu((M_n(\cc))_1,2\epsilon)\leq
\left(\frac{c_2}{2\epsilon}\right)^{2n^2} \end{equation} and
\[\left(\frac{c_1}{\epsilon}\right)^{2k^2}\leq
\nu((M_k(\cc))_1,\epsilon)\leq
\left(\frac{c_2}{\epsilon}\right)^{2k^2}.
\]
Let $\{T_{t}\}_{t\in \mathbb{T}}$ be an $\epsilon$-net of
$(M_k(\cc))_1$ such that $\#\mathbb{T}\leq
\left(\frac{c_2}{\epsilon}\right)$. \vskip 1cm

 Now for every $T\in (M_n(\cc))_1$,
$\|TP-PT\|_2<\epsilon$ for some projection $P\in M_n(\cc)$ with
rank $k$. Write
\[T=\left(\begin{array}{cc}
T_{11}&T_{12}\\
T_{21}&T_{22}
\end{array}\right)
\] with respect to the decomposition $I=P+(I-P)$.  Since $\|T\|\leq 1$, $\|T_{11}\|,
\|T_{22}\|\leq 1$. Choose $t_1,t_2\in \mathbb{T}$ such that
$\|T_{11}-T_{t_1}\|_2<\epsilon$ and
$\|T_{22}-T_{t_2}\|_2<\epsilon$ with respect to the normalized
trace norm on $M_k(\cc)$. Since $\|TP-PT\|_2<\epsilon$,
\[\|T-\left(\begin{array}{cc}
T_{t_1}&0\\
0&T_{t_2}
\end{array}\right)\|_2<2\epsilon.
\]
This implies that,
\begin{equation}\label{E:covering number of Omega j}
\nu((M_n(\cc))_1, 2\epsilon)\leq
\left(\frac{c_2}{\epsilon}\right)^{2k^2}\cdot
\left(\frac{c_2}{\epsilon}\right)^{2k^2}=\left(\frac{c_2}{\epsilon}\right)^{4k^2}.
\end{equation}
\vskip 1cm

 Note that $n=2k$. By (\ref{E:lower bound}),
\[
\left(\frac{c_1}{2\epsilon}\right)^{2n^2}\leq
\left(\frac{c_2}{\epsilon}\right)^{n^2}.
\]
By taking $\ln$ on both sides, we have
\[\frac{2(\ln c_1-\ln 2-\ln \epsilon )}{-\ln \epsilon}\leq
\frac{\ln c_2-\ln \epsilon}{-\ln \epsilon}.\] Let
$\epsilon\rightarrow 0+$. This implies  $2\leq 1$. This is a
contradiction.
 \end{proof}

\begin{Theorem}\label{T:ultrapower} The von Neumann algebra
$\prod^\omega M_n(\cc)$ is not $\ast$-isomorphic to $\R^\omega$,
the ultrapower algebra of the hyperfinite ${\rm II}_1$ factor.
\end{Theorem}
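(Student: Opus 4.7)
The plan is to exhibit a single operator $T \in \prod^\omega M_n(\cc)$ that commutes with no projection of trace $1/2$ in $\prod^\omega M_n(\cc)$, and to contrast this with the fact that, by Lemma~\ref{L:operators in ultrapower of gamma} together with property $\Gamma$ of $\R$, every operator in $\R^\omega$ does admit such a commuting projection. Since any $*$-isomorphism between type ${\rm II}_1$ factors is automatically trace-preserving (by uniqueness of the normal faithful tracial state on a factor), this structural discrepancy will rule out an isomorphism $\prod^\omega M_n(\cc) \cong \R^\omega$.

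For the bad operator, I would take $T = (T_n) \in \prod^\omega M_n(\cc)$ where $T_n \in M_n(\cc)$ are the unit-norm matrices supplied by Lemma~\ref{L:H-S}, with associated universal constant $\alpha > 0$. The central computation is to show that $\|PT - TP\|_2 \geq \alpha$ for every projection $P \in \prod^\omega M_n(\cc)$ with $\tau_\omega(P) = 1/2$. I would argue as follows: choose projection representatives $(P_n)$ of $P$; since $\mathrm{rank}(P_n)/n \to 1/2$ along $\omega$, each $P_n$ lies at $\|\cdot\|_2$-distance at most $\sqrt{|\mathrm{rank}(P_n) - [n/2]|/n}$ from a projection $P_n'$ of rank exactly $[n/2]$, obtained by adjusting $P_n$ by a few mutually orthogonal rank-one projections. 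Lemma~\ref{L:H-S} then yields $\|P_n' T_n - T_n P_n'\|_2 \geq \alpha$, and since $\|T_n\| = 1$ the commutator is perturbed by at most $2\|P_n - P_n'\|_2$, so $\|P_n T_n - T_n P_n\|_2 \geq \alpha - 2\|P_n - P_n'\|_2$; passing to the $\omega$-limit gives $\|PT - TP\|_2 \geq \alpha$.

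To finish, I would suppose for contradiction that there exists a $*$-isomorphism $\phi \colon \prod^\omega M_n(\cc) \to \R^\omega$. Because $\R$ enjoys property $\Gamma$, Lemma~\ref{L:operators in ultrapower of gamma} applied to $\phi(T) \in \R^\omega$ produces a projection $Q \in \R^\omega$ with $\tau_\omega(Q) = 1/2$ and $Q\phi(T) = \phi(T)Q$. Since $\phi$ preserves the unique tracial state, $\phi^{-1}(Q)$ is a trace-$1/2$ projection in $\prod^\omega M_n(\cc)$ commuting with $T$, contradicting the previous paragraph. The only genuinely delicate step is the rounding of $(P_n)$ to an exactly rank-$[n/2]$ sequence without disturbing the $\omega$-limit of the commutator norm; the remainder is a direct assembly of Lemmas~\ref{L:operators in ultrapower of gamma} and~\ref{L:H-S}.
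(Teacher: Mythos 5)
Your proposal is correct and follows essentially the same route as the paper: construct $T=(T_n)$ from the hard-to-commute matrices of Lemma~\ref{L:H-S}, show no trace-$1/2$ projection of $\prod^\omega M_n(\cc)$ commutes with $T$, and contrast this with $\R^\omega$ via Lemma~\ref{L:operators in ultrapower of gamma} and trace-preservation of $*$-isomorphisms between ${\rm II}_1$ factors. The only difference is that you spell out the rounding of the lifted projections $(P_n)$ to exact rank $[n/2]$ (with the $\sqrt{|\mathrm{rank}(P_n)-[n/2]|/n}$ estimate), whereas the paper compresses this into the phrase ``we may assume that $P_n$ is a projection with rank $[n/2]$.''
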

\begin{proof} Choose $T_n\in M_n(\cc)$ as in Lemma~\ref{L:H-S}.
Let $T=(T_n)\in \prod^\omega M_n(\cc)$. Claim if $P$ is a
projection in $\prod^\omega M_n(\cc)$ such that $TP=PT$, then
$\tau_\omega(P)\neq 1/2$. Otherwise, suppose $P=(P_n)\in
\prod^\omega M_n(\cc)$ is a projection such that $TP=PT$ and
$\tau_\omega(P)=1/2$. We may assume that $P_n$ is a projection in
$M_n(\cc)$ with ${\rm rank}P=\left[\frac{n}{2}\right]$. By
Lemma~\ref{L:H-S}, $\|T_nP_n-P_nT_n\|_2\geq \alpha>0$. Hence
$\|PT-TP\|_2\geq \alpha>0$. This is a contradiction.
 On the other hand, for every operator $T\in \R^\omega$,
there is a projection $Q\in \R^\omega$ such that $TQ=QT$ and
$\tau_\omega(Q)=1/2$ by Lemma~\ref{L:operators in ultrapower of
gamma}. So $\prod^\omega M_n(\cc)$ is not $\ast$-isomorphic to
$\R^\omega$.
\end{proof}

\begin{Remark}\emph{By Theorem 9 of~\cite{H-S}, there is an operator $T$
in $\prod^\omega M_n(\cc)$ such that if $TP=PT$ for some
projection $P$ in $\prod^\omega M_n(\cc)$, then $P=0$ or $P=I$.}
\end{Remark}

\noindent{\bf Question:}\, Can $\R^\omega$ be embedded into
$\prod^\omega M_n(\cc)$? If $\M$ is a separable type ${\rm II}_1$
factor and $\M^\omega\cong \R^\omega$, is $\M\cong \R$?

\section{The lattice of invariant subspaces of an operator affiliated with a type ${\rm II}_1$ factor }

Let $\M$ be a factor (not necessarily type ${\rm II}_1$) acting on
a Hilbert space $\H$ and $T\in\M$. We denote by $Lat_{\M} T$ the
set of projections $P\in \M$ such that $TP=PTP$. So $P\in
Lat_{\M}$ if and only if $P\H$ is an invariant subspace of $T$.
 Recall that a hyperinvariant subspace of $T$ is a
(closed) subspace invariant under every operator in $\{T\}'$. It
is easy to see that the projection onto a hyperinvariant subspace
of $T$ is in the von Neumann algebra generated by $T$.

Suppose $S,T$ are two operators in $\M$. Recall that $S$ and $T$
are \emph{quasi-similar} in $\M$ if there are operators $X, Y\in
\M$ which are one-to-one and have dense range such that $SX=XT$
and $YS=TY$. The following theorem is given in~\cite{R-R}(Theorem
6.19).

\begin{Theorem} If $S$
and $T$ are quasi-similar in $\B(\H)$ and $S$ has a nontrivial
hyperinvariant subspace, then $T$ has a nontrivial hyperinvariant
subspace.
\end{Theorem}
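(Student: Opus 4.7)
The plan is to produce a nontrivial closed subspace that is invariant under every operator commuting with $T$. Let $\K\subset\H$ be a nontrivial hyperinvariant subspace for $S$. The central algebraic identity driving the proof is that the map $C\mapsto XCY$ carries $\{T\}'$ into $\{S\}'$: for any $C\in\{T\}'$,
\[
S(XCY)=(SX)CY=(XT)CY=X(TC)Y=X(CT)Y=XC(TY)=XC(YS)=(XCY)S.
\]
Since $\K$ is hyperinvariant for $S$, this forces $XCY\K\subseteq\K$ for every $C\in\{T\}'$, equivalently $CY\K\subseteq X^{-1}(\K)$.

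With that in hand, I would define
\[
\N=\overline{\mathrm{span}}\bigl\{CY\K:C\in\{T\}'\bigr\},
\]
the smallest closed subspace of $\H$ that contains $Y\K$ and is stable under every element of $\{T\}'$. Because $\{T\}'$ is a unital algebra containing $T$ itself, $\N$ is automatically $T$-invariant and, by construction, hyperinvariant for $T$. The work then reduces to verifying that $\N\neq\{0\}$ and $\N\neq\H$.

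Nonzero is the easy half: $\N\supseteq Y\K$, and $Y\K\neq\{0\}$ because $Y$ is injective and $\K\neq\{0\}$. For properness, the identity from the first paragraph shows each generating set $CY\K$ lies in $X^{-1}(\K)=\{h:Xh\in\K\}$, a closed subspace since $X$ is bounded, so $\N\subseteq X^{-1}(\K)$. If $X^{-1}(\K)$ were all of $\H$, then $X\H\subseteq\K$; taking closure and using that $X$ has dense range would give $\H\subseteq\K$, contradicting $\K\neq\H$. Hence $\N\subsetneq\H$, and $\N$ is the desired nontrivial hyperinvariant subspace for $T$.

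The conceptual hurdle is picking the right candidate for $\N$. The naive choices $\overline{Y\K}$ and $X^{-1}(\K)$ are easily seen to be $T$-invariant but not manifestly hyperinvariant, while the abstract \emph{hyperinvariant subspace of $T$ generated by $Y\K$} is hyperinvariant by fiat but not manifestly proper. The identity $XCY\in\{S\}'$ is precisely what marries the two requirements: it lets one build $\N$ as a $\{T\}'$-stable enlargement of $Y\K$ while simultaneously trapping it inside the proper subspace $X^{-1}(\K)$.
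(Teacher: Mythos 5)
The paper does not give a proof of this statement; it quotes it as Theorem 6.19 of Radjavi--Rosenthal, so there is no in-paper proof to compare against. Your argument is correct, and it is essentially the standard Radjavi--Rosenthal argument: take $\N$ to be the closed $\{T\}'$-invariant subspace generated by $Y\K$, note it is nonzero because $Y$ is injective and $\K\neq\{0\}$, and use the identity $XCY\in\{S\}'$ (valid for every $C\in\{T\}'$ since $SX=XT$ and $TY=YS$) to confine $\N$ inside the proper closed subspace $X^{-1}(\K)$, which cannot be all of $\H$ because $X$ has dense range and $\K\neq\H$.
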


It is still not known that if  we replace the hyperinvariant
subspace by the invariant subspace in the above theorem, the
theorem still holds or not. However, in this section we will show
that if we replace $\B(\H)$ by a type ${\rm II}_1$ factor  and
replace the hyperinvariant subspace by the invariant subspace,
then the above theorem still holds.\vskip 1cm

 We denote by $N(T)$ the
kernel space of $T$ and $R(T)$ the closure of range space of $T$.

\begin{Lemma}\label{L:kernal and range}
 Let $\M$ be a finite von Neumann algebra with a faithful normal trace
$\tau$, and let $T\in \M$. Then $\tau(R(T))+\tau(N(T))=1$. In
particular, $N(T)=0$ if and only if $R(T)=I$.
\end{Lemma}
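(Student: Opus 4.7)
The plan is to use polar decomposition together with the fact that in a finite von Neumann algebra Murray--von Neumann equivalent projections carry the same trace.

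First I would write $T = V|T|$ for the polar decomposition in $\M$, where $V \in \M$ is the partial isometry with initial space $\overline{\mathrm{range}(|T|)}$ and final space $\overline{\mathrm{range}(T)}$. Then $V^*V$ is the projection onto the initial space, which equals $N(|T|)^\perp = N(T)^\perp = I - N(T)$ (using that $|T|$ is self-adjoint, so its kernel and closure of range are complementary). Likewise $VV^*$ is the projection onto the final space, which is exactly $R(T)$.

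Next I would invoke the fact that $V^*V$ and $VV^*$ are Murray--von Neumann equivalent projections in $\M$, and since $\tau$ is a faithful normal trace on the finite von Neumann algebra $\M$, equivalent projections have equal trace. Hence
\[
\tau(R(T)) = \tau(VV^*) = \tau(V^*V) = \tau(I - N(T)) = 1 - \tau(N(T)),
\]
which rearranges to $\tau(R(T)) + \tau(N(T)) = 1$.

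For the final ``in particular'' statement, I would use faithfulness of $\tau$: a projection $P$ in $\M$ satisfies $\tau(P) = 0$ iff $P = 0$, and $\tau(P) = 1$ iff $I - P = 0$, i.e., $P = I$. Thus $N(T) = 0 \Leftrightarrow \tau(N(T)) = 0 \Leftrightarrow \tau(R(T)) = 1 \Leftrightarrow R(T) = I$. There is no real obstacle here; the only subtle point is making sure the polar decomposition is carried out inside $\M$ (which is standard for operators in a von Neumann algebra) and that one correctly identifies the initial and final projections of $V$ with $I - N(T)$ and $R(T)$ respectively.
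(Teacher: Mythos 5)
Your proof is correct and is essentially the same as the paper's: both rest on polar decomposition in $\M$ together with trace-preservation. The only cosmetic difference is that the paper exploits the finite-von-Neumann-algebra fact that one may take the polar decomposition $T=U|T|$ with $U$ a \emph{unitary} in $\M$ (so $R(T)$ and $R(T^*)$ are unitarily conjugate), whereas you use the universal partial-isometry form and then directly invoke that Murray--von Neumann equivalent projections have equal trace; these are two formulations of the same underlying finiteness principle.
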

\begin{proof} By the polar decomposition theorem,
there is a unitary operator $U$ and a positive operator $|T|$ in
$\M$ such that $T=U|T|$. So $T^*=|T|U^*$. Now, we have
$T^*T=|T|^2=U^*TT^*U$. Thus,
$\tau(R(T))=\tau(R(TT^*))=\tau(R(T^*T))=\tau(R(T^*))=1-\tau(N(T))$.
\end{proof}

\begin{Corollary}\label{C:kernal and range}  Let $\M$ be a finite von Neumann algebra
 with a faithful normal trace
$\tau$. Let $T\in \M$ be an operator such that $N(T)=0$, and let
$E\in\M$ be a projection. Then $\tau(R(TE))=\tau(E)$. In
particular, if $0<E<I$, then $0<R(TE)<I$.
\end{Corollary}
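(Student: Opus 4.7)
The plan is to derive this directly from Lemma~\ref{L:kernal and range} applied to the operator $TE \in \M$, the only substantive step being to compute $N(TE)$ in terms of $E$.

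First I would identify the kernel projection $N(TE)$. By definition, $\xi \in N(TE)\H$ iff $TE\xi = 0$, i.e., iff $E\xi \in N(T)\H$. Since the hypothesis gives $N(T) = 0$, this forces $E\xi = 0$, which means $\xi \in (I-E)\H$. Conversely every such $\xi$ is killed by $TE$, so as projections we have the equality $N(TE) = I - E$, hence $\tau(N(TE)) = 1 - \tau(E)$.

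Next I would apply Lemma~\ref{L:kernal and range} to the operator $TE \in \M$: it yields $\tau(R(TE)) + \tau(N(TE)) = 1$, and substituting the value of $\tau(N(TE))$ computed above gives $\tau(R(TE)) = \tau(E)$, which is the main claim.

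For the ``in particular'' statement, assume $0 < E < I$, so that $0 < \tau(E) < 1$ by faithfulness of $\tau$. Then $0 < \tau(R(TE)) < 1$, and since $\tau$ is faithful the projection $R(TE)$ can be neither $0$ (which would force $\tau(R(TE)) = 0$) nor $I$ (which would force $\tau(I - R(TE)) = 0$). The only obstacle worth flagging is the kernel identification in the first step, but it is immediate from $N(T) = 0$; there is no deeper difficulty.
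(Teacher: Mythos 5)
Your proof is correct and takes essentially the same route as the paper: identify $N(TE) = I - E$ from $N(T) = 0$, then apply Lemma~\ref{L:kernal and range} to $TE$ to conclude $\tau(R(TE)) = \tau(E)$, with faithfulness of $\tau$ giving the ``in particular'' clause. You merely spell out the kernel computation and the final step in more detail than the paper does.
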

\begin{proof} Since $N(T)=0$, $N(TE)=I-E$. By lemma~\ref{L:kernal and range},
$\tau(R(TE))=1-\tau(N(TE))=1-\tau(I-E)=\tau(E)$.
\end{proof}

\begin{Proposition}\label{P:quasi-similarity and invariant subspaces}
  Let $\M$ be a type ${\rm II}_1$ factor with a faithful normal trace
$\tau$ and $S, T\in\M$. If there is an operator $X\in\M$ such that
$N(X)=0$ and $XS=TX$, then $Lat S$ is isomorphic to a sublattice
of $Lat T$ and $Lat T$ is isomorphic to a sublattice of $Lat S$.
In particular,  $S$ has a nontrivial invariant subspace if and
only if $T$ has a nontrivial invariant subspace.
\end{Proposition}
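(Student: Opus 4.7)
The plan is to construct an explicit lattice embedding $\Phi:\mathrm{Lat}\, S\hookrightarrow\mathrm{Lat}\, T$ using $X$, and then obtain the reverse embedding by applying the same construction to $X^*$ combined with the duality $P\mapsto I-P$ between $\mathrm{Lat}\, T$ and $\mathrm{Lat}\, T^*$. First I would define, for each $E\in\mathrm{Lat}\, S$,
\[
\Phi(E)=[\,\overline{XE\H}\,],
\]
the projection onto the closure of the range of $XE$. The intertwining $XS=TX$ and the identity $SE=ESE$ give $TXE=XSE=X(ESE)$, so $T$ carries $XE\H$ into itself; by boundedness, $T$ carries $\overline{XE\H}$ into itself, so $\Phi(E)\in\mathrm{Lat}\, T$. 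Since $N(X)=0$, Corollary~\ref{C:kernal and range} applied to $X$ yields $\tau(\Phi(E))=\tau(R(XE))=\tau(E)$, i.e., $\Phi$ preserves the trace of projections.

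Next I would verify that $\Phi$ is a lattice monomorphism. Preservation of joins is direct: $X(E_1\vee E_2)\H\subseteq\overline{XE_1\H+XE_2\H}\subseteq X(E_1\vee E_2)\H$ (after closing), so $\Phi(E_1\vee E_2)=\Phi(E_1)\vee\Phi(E_2)$. For meets I would use the tracial identity $\tau(A\vee B)+\tau(A\wedge B)=\tau(A)+\tau(B)$ valid for projections in $\M$: combining this with join preservation and $\tau\circ\Phi=\tau$ gives
\[
\tau(\Phi(E_1)\wedge\Phi(E_2))=\tau(E_1)+\tau(E_2)-\tau(E_1\vee E_2)=\tau(E_1\wedge E_2)=\tau(\Phi(E_1\wedge E_2)).
\]
Because $\Phi(E_1\wedge E_2)\le\Phi(E_1)\wedge\Phi(E_2)$ trivially, faithfulness of $\tau$ in a finite factor forces equality. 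Injectivity follows from the same circle of ideas: if $\Phi(E_1)=\Phi(E_2)$, then $\Phi(E_1\vee E_2)=\Phi(E_1)$, so $\tau(E_1\vee E_2)=\tau(E_1)$, whence $E_2\le E_1$, and symmetrically $E_1\le E_2$.

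For the reverse embedding $\mathrm{Lat}\, T\hookrightarrow\mathrm{Lat}\, S$, I would take adjoints of $XS=TX$ to get $X^*T^*=S^*X^*$. Since $N(X)=0$, Lemma~\ref{L:kernal and range} gives $R(X)=I$, hence $N(X^*)=I-R(X)=0$. The construction above, applied to the triple $(T^*,S^*,X^*)$, yields a lattice monomorphism $\Phi':\mathrm{Lat}\, T^*\hookrightarrow\mathrm{Lat}\, S^*$. Using the anti-isomorphism $P\mapsto I-P$ between $\mathrm{Lat}\, T$ and $\mathrm{Lat}\, T^*$ (and likewise between $\mathrm{Lat}\, S^*$ and $\mathrm{Lat}\, S$), define
\[
\Psi(P)=I-\Phi'(I-P),\qquad P\in\mathrm{Lat}\, T.
\]
A short check (using that each anti-isomorphism swaps $\vee$ and $\wedge$, so the two swaps cancel) shows $\Psi$ is an injective lattice homomorphism into $\mathrm{Lat}\, S$. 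Finally, for the ``in particular'' statement: if $0<E<I$ with $E\in\mathrm{Lat}\, S$, then $0<\tau(E)<1$, hence $0<\tau(\Phi(E))<1$, which forces $0<\Phi(E)<I$; thus $T$ has a nontrivial invariant subspace in $\M$. The reverse implication uses $\Psi$. I do not expect any real obstacle — all of the work is in bookkeeping the trace identities and the duality between $\mathrm{Lat}\, T$ and $\mathrm{Lat}\, T^*$; the only subtle point is meet preservation, where the finiteness of the trace is essential.
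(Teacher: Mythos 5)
Your proof is correct and follows essentially the same route as the paper: define $\Phi(E)$ as the projection onto $\overline{XE\H}$, use Corollary~\ref{C:kernal and range} to get trace preservation, check joins directly and meets via the trace identity for projections, get injectivity from the trace, and obtain the reverse embedding by passing to adjoints. Your treatment of the last step is in fact slightly more careful than the paper's: you explicitly note that $P\mapsto I-P$ is an \emph{anti}-isomorphism between $\mathrm{Lat}\,T$ and $\mathrm{Lat}\,T^*$ and that the two anti-isomorphisms compose to a genuine lattice embedding $\Psi$, whereas the paper glosses over this (and also has sign typos in its trace identity) while relying on the same underlying argument.
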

\begin{proof}For $E\in Lat_\M S$, let $F=R(XE)$. The assumption $XS=TX$ implies that $F\in Lat_\M T$.
  Define $\phi(E)=F$.
 By corollary~\ref{C:kernal
and range}, $\tau(F)=\tau(E)$. We want to show that $\phi$ is a
lattice isomorphism from $Lat_\M S$ onto a sublattice of $Lat_\M
T$. Let $E_1, E_2\in Lat S$. Then $\phi(E_1\vee E_2)=R(X(E_1\vee
E_2))= R(XE_1)\vee R(XE_2)=\phi(E_1)\vee \phi(E_2)$ and
$\phi(E_1\wedge E_2)=R(X(E_1\wedge E_2))\leq R(X(E_1))\wedge
R(X(E_2))=\phi(E_1)\wedge \phi(E_2)$. By corollary~\ref{C:kernal
and range}, \[\tau(\phi(E_1)\wedge
\phi(E_2))=\tau(\phi(E_1)\vee\phi(E_2))-\tau(\phi(E_1))-\tau(\phi(E_2))\]\[=
\tau(E_1\vee E_2)-\tau(E_1)-\tau(E_2)=\tau(E_1\wedge
E_2)=\tau(\phi(E_1\wedge E_2)).\] So $\phi(E_1\wedge
E_2)=\phi(E_1)\wedge \phi(E_2)$. Thus $\phi$ is a lattice
homomorphism. Let $E_1, E_2\in Lat S$ and $E_1\neq E_2$. We may
assume that $E=E_1\vee E_2>E_1$. So $\tau(E)>\tau(E_1)$. If
$\phi(E_1)=\phi(E_2)=F\in Lat T$. Then $F=\phi(E_1\vee E_2)$. By
corollary~\ref{C:kernal and range},
$\tau(F)=\tau(E_1)=\tau(E_1\vee E_2)=\tau(E)$. This is a
contradiction. So $\phi$ is a lattice isomorphism from $Lat_\M S$
onto a sublattice of
$Lat_\M T$.\\

Similarly, by $X^*T^*=S^*X^*$, there is a lattice isomorphism from
$Lat_\M T^*$ onto a sublattice of $Lat_\M S^*$. Since $Lat_\M T$
is isomorphism to $Lat_\M T^*$ and $Lat_\M S$ is isomorphic  to
$Lat_\M S^*$. So there is a lattice isomorphic from $Lat_\M T$
onto a sublattice of $Lat_\M S$.
\end{proof}

\begin{Proposition}\label{P:quasi-similarity and hyperinvariant subspaces}
 Let $\M$ be a type ${\rm II}_1$ factor and $S,T\in \M$. If $S$
and $T$ are quasi-similar, then the lattice of hyperinvariant
subspaces of $S$ and the lattice of hyperinvariant subspaces of
$T$ are isomorphic.
\end{Proposition}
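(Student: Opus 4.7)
The plan is to build mutually inverse order-preserving bijections between the hyperinvariant lattices $\mathrm{Hyp}(S)$ and $\mathrm{Hyp}(T)$, in the spirit of Proposition 5.5. Fix $X,Y\in\M$ injective with $SX = XT$ and $YS = TY$; injectivity forces dense range by Lemma 5.2, and any hyperinvariant projection of $T\in\M$ automatically lies in $\M$ (since $\M'\subseteq\{T\}'$ is self-adjoint, any $\{T\}'$-invariant projection must commute with all of $\M'$ and hence lie in $\M''=\M$). I would define
\[\phi(E) := \text{projection onto }\overline{\{S\}'\cdot XE\H}, \qquad \psi(F) := \text{projection onto }\overline{\{T\}'\cdot YF\H},\]
i.e., $\phi(E)$ is the smallest hyperinvariant subspace of $S$ containing $XE\H$ and symmetrically for $\psi(F)$. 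Both maps are manifestly monotone and land in the correct hyperinvariant lattice by construction.

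The key algebraic observation is that the intertwiners conjugate commutants: for $B\in\{S\}'$,
\[(YBX)T = YB(XT) = YB(SX) = Y(BS)X = Y(SB)X = (YS)BX = T(YBX),\]
so $YBX\in\{T\}'$; symmetrically $XBY\in\{S\}'$ for $B\in\{T\}'$, and in particular $YX\in\{T\}'$ and $XY\in\{S\}'$.

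Next I would verify $\psi\circ\phi = \mathrm{id}$. For $\psi(\phi(E))\le E$, note
\[Y\phi(E)\H \subseteq \overline{Y\{S\}'XE\H} = \overline{\{YBX : B\in\{S\}'\}E\H} \subseteq \overline{\{T\}'E\H} \subseteq E\H,\]
where the last inclusion uses the hyperinvariance of $E$ for $T$; so $E$ itself is a hyperinvariant subspace of $T$ containing $Y\phi(E)\H$. For $\psi(\phi(E))\ge E$, the crucial step is the density $\overline{YXE\H} = E\H$. Since $YX\in\{T\}'$ and $E$ is hyperinvariant, $(YX)E = E(YX)E$, so $E(YX)E$ is an element of the type ${\rm II}_1$ factor $E\M E$ that is injective on $E\H$ (because $YX$ is injective on all of $\H$), and Lemma 5.2 applied inside $E\M E$ forces it to have dense range. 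Hence $\psi(\phi(E))\H \supseteq \overline{YXE\H} = E\H$. The symmetric argument gives $\phi\circ\psi = \mathrm{id}$, so $\phi$ and $\psi$ are mutually inverse monotone bijections; each is monotone with monotone inverse, hence an order-isomorphism, and therefore a lattice isomorphism.

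The step I expect to be the main obstacle is the density claim $\overline{YXE\H} = E\H$: this is where the type ${\rm II}_1$ hypothesis is used essentially, via applying Lemma 5.2 inside the compression $E\M E$. The analogous statement fails in $\B(\H)$, which presumably explains why Theorem 5.3 only yields preservation of nontriviality rather than a full lattice isomorphism.
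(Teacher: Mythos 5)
Your proof is correct, and at the structural level it matches the paper's: both build $\phi(E)$ as the smallest hyperinvariant subspace containing $XE\H$ (symmetrically $\psi$), both use the observation that the intertwiners conjugate one commutant into the other to obtain $\psi(\phi(E))\le E$, and both note that order-preserving mutually inverse maps give a lattice isomorphism. Where you diverge is in establishing the reverse inequality $\psi(\phi(E))\ge E$. The paper runs a trace count through Corollary~\ref{C:kernal and range}: one has $\tau(\phi(E))\ge\tau(R(XE))=\tau(E)$, and the inclusion $R(Y\phi(E))\le E$ gives $\tau(E)\ge\tau(R(Y\phi(E)))=\tau(\phi(E))$; the forced equalities then pin down $\phi(E)=R(XE)$ and $E=R(Y\phi(E))$, so the bijection is literally $E\mapsto R(XE)$ with inverse $F\mapsto R(YF)$. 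You avoid the trace computation entirely and instead observe that $YX$ lies in the commutant, compresses to an injective element of the corner ${\rm II}_1$ factor $E\M E$, and so has dense range in $E\H$ by Lemma~\ref{L:kernal and range} applied inside that corner, giving $\overline{YXE\H}=E\H$ directly. Both routes use finiteness in an essential and comparable way, but yours isolates the clean fixed-point identity $\overline{YXE\H}=E\H$, while the paper's trace argument has the small extra payoff of identifying $\phi(E)$ concretely as $R(XE)$ rather than as an a priori larger hyperinvariant hull.
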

\begin{proof} Let $X, Y$ in $\M$ be one to one operators with dense
ranges such that $XS=TX$ and $SY=YT$. Let $E$ be a hyperinvariant
subspace of $S$. Let $F$ the closure of the linear span of
$R(AXE)$, where $AT=TA$. Then clearly $F$ is a hyperinvariant
subspace of $T$. Note that $\tau(F)\geq \tau(XE)=\tau(E)$ by
corollary~\ref{C:kernal and range}. Since $YAXS=YATX=YTAX=SYAX$
and $E$ is a hyperinvariant subspace of $S$, $R(YAXE)\leq E$ and
therefore, $R(YF)\leq E$. By corollary~\ref{C:kernal and range},
$\tau(E)\geq \tau(F)$. So $\tau(F)=\tau(E)$,  $F=R(XE)$, and
$E=R(YF)$. Now $E\rightarrow F=R(XE)$ is a lattice isomorphism
(the inverse is $F\rightarrow E=R(YF)$) from the lattice of
hyperinvariant subspaces of $S$ onto the lattice of hyperinvariant
subspaces of $T$.
\end{proof}

\begin{Corollary}\label{C:invariant and hyperinvariant}
 Let $\M$ be a type ${\rm II}_1$ factor and $S,T\in \M$. Then
 $Lat_\M ST$ is not trivial iff  $Lat_\M TS$ is not trivial. Furthermore, if $N(S)=N(T)=0$, then  $Lat_\M ST$
 is isomorphic to $Lat_\M TS$ and the lattice of hyperinvariant subspaces of $ST$ is isomorphic
 to the lattice of hyperinvariant subspaces of $TS$ as lattices.
\end{Corollary}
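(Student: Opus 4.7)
The corollary has three separate claims: (i) the nontriviality equivalence, (ii) the isomorphism $Lat_{\M}(ST) \cong Lat_{\M}(TS)$ under $N(S)=N(T)=0$, and (iii) the corresponding hyperinvariant lattice isomorphism. I would handle them in that order, reducing each to the propositions already proved.

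For (i), I would split into cases according to whether at least one of $N(S)$, $N(T)$ vanishes. If $N(T)=0$, then $X:=T$ satisfies $N(X)=0$ together with the intertwining relation $X(ST)=(TS)X$, so Proposition~\ref{P:quasi-similarity and invariant subspaces} applied with $ST$, $TS$ in place of $S$, $T$ gives that $Lat_{\M}(ST)$ and $Lat_{\M}(TS)$ are simultaneously nontrivial. Symmetrically, if $N(S)=0$ one takes $X:=S$ and uses $S(TS)=(ST)S$. If instead both $N(S)\ne 0$ and $N(T)\ne 0$, I would exhibit nontrivial invariant subspaces directly: $(ST)N(T)=0\subseteq N(T)$ shows $N(T)\in Lat_{\M}(ST)$, and the same calculation with roles swapped gives $N(S)\in Lat_{\M}(TS)$. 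These are nontrivial unless the corresponding operator is zero; and if, say, $T=0$ then $ST=TS=0$ and every projection is invariant, so both lattices are certainly nontrivial.

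For (ii), assume $N(S)=N(T)=0$. By Lemma~\ref{L:kernal and range} both $S$ and $T$ have dense range. Define
\[
\varphi:Lat_{\M}(TS)\to Lat_{\M}(ST),\ E\mapsto R(SE),\qquad \psi:Lat_{\M}(ST)\to Lat_{\M}(TS),\ F\mapsto R(TF).
\]
Proposition~\ref{P:quasi-similarity and invariant subspaces}, applied with $X=S$ for $\varphi$ and $X=T$ for $\psi$, shows each is a lattice injection, and Corollary~\ref{C:kernal and range} gives $\tau(\varphi(E))=\tau(E)$ and $\tau(\psi(F))=\tau(F)$. To see they are mutually inverse, fix $E\in Lat_{\M}(TS)$: since $N(TS)=0$ and $TSE\subseteq E$, Corollary~\ref{C:kernal and range} forces $R(TSE)=E$. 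Hence $\psi(\varphi(E))=R(T\cdot R(SE))\supseteq R(TSE)=E$, and $\tau(\psi(\varphi(E)))=\tau(\varphi(E))=\tau(E)$, so faithfulness of $\tau$ yields $\psi(\varphi(E))=E$. The equality $\varphi\circ\psi=\mathrm{id}$ is symmetric. For (iii), the same $X=S$ and $Y=T$ are one-to-one with dense range and satisfy $(ST)X=X(TS)$ and $Y(ST)=(TS)Y$, so $ST$ and $TS$ are quasi-similar in $\M$; Proposition~\ref{P:quasi-similarity and hyperinvariant subspaces} then gives the hyperinvariant lattice isomorphism.

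The main obstacle I expect is the case analysis in (i): when neither $S$ nor $T$ is injective the Proposition does not apply directly, and one has to produce concrete nontrivial invariant subspaces of $ST$ and $TS$ (here $N(T)$ and $N(S)$) and make sure that the degenerate sub-cases $S=0$ or $T=0$ do not spoil the argument. Once one is in the injective regime, (ii) and (iii) are essentially bookkeeping on top of Lemma~\ref{L:kernal and range}, Corollary~\ref{C:kernal and range}, and the two preceding propositions.
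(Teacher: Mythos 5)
Your proof is correct and follows essentially the same route as the paper: for the nontriviality equivalence you split into cases depending on which of $N(S),N(T)$ vanish (the paper instead first peels off $TS=0$ and then splits on $N(S)\neq 0$ or $R(T)\neq I$, which by Lemma~\ref{L:kernal and range} is the same dichotomy) and invoke Proposition~\ref{P:quasi-similarity and invariant subspaces} via the intertwining $T(ST)=(TS)T$ or $S(TS)=(ST)S$; for the injective case you exhibit the same trace-preserving maps $E\mapsto R(SE)$, $F\mapsto R(TF)$ and verify they are mutually inverse, and deduce the hyperinvariant lattice isomorphism from quasi-similarity exactly as the paper does.
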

\begin{proof}  Suppose  $Lat_\M ST$ is not trivial. If $TS=0$, then
$Lat_\M  TS$ is not trivial. We assume that $TS\neq 0$.
 If $N(S)\neq 0$ or
$R(T)\neq I$, then $N(S)$ or $R(T)$ is a non trivial invariant
subspace of $TS$.  if $N(S)=0$ and $R(T)=I$, then by
lemma~\ref{L:kernal and range}, $R(S)=I$ and $N(T)=0$.  Thus
$ST,TS$ are quasisimilar. By Proposition~\ref{P:quasi-similarity
and invariant subspaces}, $Lat_\M TS$ is not trivial.\vskip 1 cm

If $N(S)=N(T)=0$, then $R(S)=R(T)=I$ by lemma~\ref{L:kernal and
range}. For $E\in Lat_\M ST$, let $F=R(TE)$ and $E_1=R(SF)$. Then
$E_1=R(SF)=R(STE)\leq E$ since $E\in Lat_\M ST$. By
corollary~\ref{C:kernal and range}, $\tau(E)=\tau(F)=\tau(E_1)$.
This implies that $E=E_1$. Note that $R(TSF)=R(TSTE)\leq R(TE)=F$,
$F\in Lat_\M TS$. Define $\phi(E)=R(TE)$ and $\psi(F)=R(SF)$ for
$E\in Lat_\M ST$ and $F\in Lat_\M TS$, respectively. Then
$\psi=\phi^{-1}$. So $\phi$ is a lattice isomorphism from $Lat_\M
ST$ onto $Lat_\M TS$.\vskip 1cm

The lattice of hyperinvariant subspaces of $ST$ is isomorphic
 to the lattice of hyperinvariant subspaces of $TS$ as lattices is a
 corollary of Proposition~\ref{P:quasi-similarity and hyperinvariant subspaces}.
\end{proof}

\begin{Remark}\label{R:last remark}\emph{
Let $T\in \B(H)$ and $V\in \B(H)$ such that $VV^*=I$ but $V^*V\neq
I$. Then  $R(V^*)$ is a nontrivial invariant subspace of $V^*TV$.
Note that $T=TVV^*$. If the first part of
Corollary~\ref{C:invariant and hyperinvariant} is true for
$\M=\B(\H)$, then the answer to the invariant subspace question
(relative to $\B(\H)$ is affirmative.}
\end{Remark}

 \vspace{.2in}
\noindent {\em E-mail address: } [Junsheng Fang]
jfang\@@math.tamu.edu\\
\noindent{\em Address:} Department of Mathematics, Texas A\&M University, College Station, TX,77843.\\

\noindent {\em E-mail address: } [Don Hadwin] don\@@math.unh.edu\\
\noindent{\em Address:} Department of Mathematics,  University of New Hampshire, Durham, NH, 03824.

\end{document}